\numberwithin{equation}{section}
\newtheorem{cor}{Corollary}[section]
\newtheorem{corx}{Corollary}
\newtheorem{teo}[cor]{Theorem}
\newtheorem{thmx}[corx]{Theorem}
\newtheorem{prop}[cor]{Proposition}
\newtheorem{lemma}[cor]{Lemma}
\theoremstyle{definition}
\newtheorem{defi}[cor]{Definition}
\theoremstyle{remark}
\newtheorem{remark}[cor]{Remark}
\newtheorem*{remark*}{Remark}
\newcommand{\R}{\mathbb{R}}
\newcommand{\C}{\mathbb{C}}
\newcommand{\h}{\mathbb{H}}
\newcommand{\SL}{\mathrm{SL}}
\newcommand{\AdS}{\mathrm{AdS}}
\newcommand{\dPSL}{\mathbb{P}\mathrm{SL}(2,\mathbb{R})\times \mathbb{P}\mathrm{SL}(2,\mathbb{R})}
\newcommand{\PSL}{\mathbb{P}\mathrm{SL}}
\newcommand{\Ima}{\mathrm{Im}}
\newcommand{\Isom}{\mathrm{Isom}}
\newcommand{\trace}{\mathrm{tr}}
\newcommand{\hol}{\mathrm{hol}}
\newcommand{\Teich}{\mathcal{T}}
\newcommand{\dTeich}{\mathcal{T}(S)\times \mathcal{T}(S)}
\DeclareMathAlphabet{\mathpzc}{OT1}{pzc}{m}{it}
\title[Riemannian metrics on $\mathcal{GH}(S)$]{Riemannian metrics on the moduli space of GHMC anti-de Sitter structures}
\author{Andrea Tamburelli}
\begin{document}

\begin{abstract} We first extend the construction of the pressure metric to the deformation space of globally hyperbolic maximal Cauchy-compact anti-de Sitter structures. We show that, in contrast with the case of the Hitchin components, the pressure metric is degenerate and we characterize its degenerate locus. We then introduce a nowhere degenerate Riemannian metric adapting the work of Qiongling Li on the $\SL(3,\R)$-Hitchin component to this moduli space. We prove that the Fuchsian locus is a totally geodesic copy of Teichmüller space endowed with a multiple of the Weil-Petersson metric. 
\end{abstract}

\date{\today}
\maketitle
\setcounter{tocdepth}{1}

\tableofcontents

\section*{Introduction}
Let $S$ be a closed, connected, oriented surface of negative Euler characteristic. The aim of this short note is to introduce two Riemannian metrics on the deformation space $\mathcal{GH}(S)$ of convex co-compact anti-de Sitter structures on $S\times \R$. These are the geometric structures relevant for the study of pairs of conjugacy classes of representations $\rho_{L,R}:\pi_{1}(S) \rightarrow \PSL(2,\R)$ that are faithful and discrete.\\

In recent years, much work has been done in order to understand the geometry of $\mathcal{GH}(S)$ (\cite{Tambu_FN}, \cite{Tambu_rays}, \cite{Tambu_pinching}, \cite{Tambu_regular}, \cite{Tambu_poly}, \cite{Charles}). It turns out that many of the phenomena described in the aformentioned papers have analogous counterparts in the theory of Hitchin representations in $\mathrm{SL}(3,\R)$ (\cite{LZ}, \cite{Loftin_neck}, \cite{Loftin_rays},  \cite{Loftin_compact}, \cite{DW}, \cite{TW}, \cite{CT}). Pushing this correspondence even further, we explain in this paper how to construct two Riemannian metrics in $\mathcal{GH}(S)$ following analogous constructions known for the $\SL(3,\R)$-Hitchin component.\\

The first Riemannian metric we define is the pressure metric introduced by Brigdeman, Canary, Labourie and Sambarino (\cite{pressure_metric}, \cite{pressure_metric_survey}) for the Hitchin components, and inspired by previous work of Bridgeman (\cite{B_pressure}) on quasi-Fuchsian representations and McMullen's thermodynamic interpretation (\cite{McMullen_pressure}) of the Weil-Petersson metric on Teichmüller space. Although the construction of the pressure metric in $\mathcal{GH}(S)$ can be carried out analogously, we show that, unlike in the Hitchin components, the pressure metric is degenerate and we characterize its degenerate locus:

\begin{thmx} The pressure metric on $\mathcal{GH}(S)$ is degenerate only at the Fuchsian locus along pure bending directions.
\end{thmx}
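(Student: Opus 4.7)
The plan is to exploit Mess's identification $\mathcal{GH}(S)\cong \Teich(S)\times \Teich(S)$, under which the AdS marked length spectrum at a holonomy $(\rho_L,\rho_R)$ is
\[
\ell_{\AdS}(\gamma)=\tfrac{1}{2}\bigl(\ell_{\rho_L}(\gamma)+\ell_{\rho_R}(\gamma)\bigr),
\]
where $\ell_{\rho_\bullet}$ denotes hyperbolic translation length. Following the Bridgeman--Canary--Labourie--Sambarino prescription, the pressure form $P$ on a tangent vector $v=(v_L,v_R)$ is, up to a positive entropy factor, the variance of the derivative cocycle $\dot\varphi_v=\tfrac{1}{2}\bigl(d\ell_{\rho_L}(\cdot)(v_L)+d\ell_{\rho_R}(\cdot)(v_R)\bigr)$ against the Bowen--Margulis measure of the reparametrized flow, corrected by the differential of the topological entropy. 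By the Liv\v{s}ic theorem, $P(v,v)=0$ if and only if $\dot\varphi_v+c\,\ell_{\AdS}$ is a H\"older coboundary for some $c\in\R$.

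First I would settle the Fuchsian case. At $\rho_L=\rho_R=\rho$ the reparametrization is trivial and the Bowen--Margulis measure reduces to the Liouville measure on $T^{1}(\h^{2}/\rho)$; moreover the topological entropy is maximal on the Fuchsian locus, so its differential vanishes there. Expanding the variance and invoking McMullen's identification of the pressure metric on $\Teich(S)$ with a constant multiple of the Weil--Petersson metric $g_{\mathrm{WP}}$, one obtains
\[
P(v,v)=\tfrac{1}{4}\,g_{\mathrm{WP}}(v_L+v_R,\,v_L+v_R).
\]
The kernel of this form is precisely the antidiagonal $\{(v,-v)\}$, which under Mess's bending parametrization coincides with the space of infinitesimal pure bending directions. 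This establishes both the degeneracy along bending and the non-degeneracy of $P$ along every other Fuchsian tangent direction.

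For a non-Fuchsian point $(\rho_L,\rho_R)$ I have to show $P$ is positive definite. The Liv\v{s}ic criterion above reduces the task to showing that the system
\[
d\ell_{\rho_L}(\gamma)(v_L)+d\ell_{\rho_R}(\gamma)(v_R)=-c\bigl(\ell_{\rho_L}(\gamma)+\ell_{\rho_R}(\gamma)\bigr),\qquad\gamma\in\pi_1(S),
\]
admits only the trivial solution $v_L=v_R=0$, $c=0$. I plan to achieve this by decoupling the two Fuchsian contributions: when $\rho_L\neq\rho_R$ the geodesic flows on $\h^{2}/\rho_L$ and $\h^{2}/\rho_R$ are not orbit equivalent, so the cohomology classes of the two length cocycles are linearly independent; once the two contributions are separated, the marked length spectrum rigidity of hyperbolic surfaces (the family $\{d\ell_{\rho_\bullet}(\gamma)\}_\gamma$ spans the cotangent space of each Teichm\"uller factor) forces $v_L$ and $v_R$ to vanish individually, and the constant $c$ along with them.

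The main obstacle is precisely this decoupling step: ruling out accidental cohomological relations between the length spectra of two distinct Fuchsian representations along a one-parameter deformation. The natural route is to convert the displayed identity into an equality of H\"older cocycles on the unit tangent bundle of a fixed Fuchsian structure, transport the $\rho_R$-contribution via the boundary conjugacy between the two limit sets, and exploit ergodicity of the geodesic flow together with the distinctness of the Liouville currents of $\rho_L$ and $\rho_R$ to conclude that the two cocycles cannot cancel each other unless both vanish. Once this cohomological independence is established, the non-degeneracy of the Weil--Petersson metric on each $\Teich(S)$ factor completes the argument.
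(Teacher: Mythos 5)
Your overall architecture matches the paper's: the thermodynamic mapping $\rho\mapsto -h(f_\rho)f_\rho$, the reduction (via the entropy-corrected Liv\v{s}ic criterion) of degeneracy to the existence of $k\in\R$ with $\frac{d}{dt}\big|_{t=0}\ell_{\rho_t}(\gamma)=k\,\ell_{\rho_0}(\gamma)$ for all closed $\gamma$, and the Fuchsian case handled by criticality of the entropy at the Fuchsian locus together with McMullen's identification, yielding degeneracy exactly along the antidiagonal $(v,-v)$. All of this is correct and is essentially the paper's Remark 1.8, Lemma 1.7 and the first half of its final theorem.

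The genuine gap is the non-Fuchsian step, which you explicitly defer (``the main obstacle is precisely this decoupling step''): that step \emph{is} the theorem at non-Fuchsian points, and your sketch for it does not work as stated. First, the claim that the geodesic flows of $\h^2/\rho_L$ and $\h^2/\rho_R$ are ``not orbit equivalent'' is false --- any two such flows are orbit equivalent (via the boundary conjugacy/structural stability); what fails when $\rho_L\neq\rho_R$ is a \emph{period-preserving} conjugacy. Second, ``linear independence of the two length cocycles'' is much weaker than what you need: you must rule out every relation $d\ell_{\rho_L}(\cdot)(v_L)+d\ell_{\rho_R}(\cdot)(v_R)+c\,(\ell_{\rho_L}+\ell_{\rho_R})=0$ with $(v_L,v_R,c)\neq 0$, i.e.\ triviality of a kernel involving all first-order variations; distinctness of the Liouville currents of $\rho_L$ and $\rho_R$ constrains the currents, not their derivative cocycles, so ergodicity alone cannot force the cancellation you want. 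Moreover, no soft cohomological argument can succeed, since the desired decoupling genuinely fails at Fuchsian points (pure bending is an exact cancellation), so $\rho_L\neq\rho_R$ must enter quantitatively. The paper does this by a Bridgeman-style asymptotic expansion: choosing $\alpha$ with $\ell_{\rho_L}(\alpha)\neq\ell_{\rho_R}(\alpha)$ and expanding the logarithms of the eigenvalues of $\rho_{i}(\alpha)^n\rho_i(\beta)$ in powers of $\lambda_i^{-2n}$, it first extracts $k=0$ (Lemma 1.9) and then $\ell'_{\rho_L}(\gamma)=\ell'_{\rho_R}(\gamma)=0$ for every $\gamma$ with distinct left/right lengths. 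Note that even then full decoupling is \emph{not} achieved: for curves with $\ell_{\rho_L}(\gamma)=\ell_{\rho_R}(\gamma)$ one only gets $\ell'_{\rho_L}(\gamma)=-\ell'_{\rho_R}(\gamma)$, so your final appeal to Weil--Petersson non-degeneracy on each factor separately is unavailable. The paper instead observes that in both cases $d\trace(\rho(\gamma))(v)=0$ in $\SO_0(2,2)$ for all $\gamma$, and concludes $v=0$ from the genericity of non-Fuchsian $\rho$ (in the sense of Proposition 10.3 of Bridgeman--Canary--Labourie--Sambarino), for which the trace differentials span $T^*_\rho\mathcal{GH}(S)$. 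To repair your proposal, replace the Liouville-current decoupling by this eigenvalue-asymptotics argument and the trace-coordinate conclusion.
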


Here, the Fuchsian locus in $\mathcal{GH}(S)$ consists of pairs of discrete and faithful representations of $\pi_{1}(S)$ that coincide up to conjugation, and pure bending directions correspond to deformations of representations away from the Fuchsian locus that are analogs of bending deformations for quasi-Fuchsian representations in $\PSL(2,\C)$ (\cite{Tambu_FN}). \\

The second Riemannian metric we define follows instead the contruction of Li on the $\SL(3,\R)$-Hitchin component (\cite{Li_metric}) and it is based on the introduction of a preferred $\rho$-equivariant scalar product in $\R^{4}$ for a given $\rho \in \mathcal{GH}(S)$. The main result is the following:

\begin{thmx} This Riemannian metric is nowhere degenerate in $\mathcal{GH}(S)$ and restricts to a multiple of the Weil-Petersson metric on the Fuchsian locus, which, moreover, is totally geodesic.
\end{thmx}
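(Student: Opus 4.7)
The plan is to adapt Li's strategy for the $\SL(3,\R)$-Hitchin component to the $\SO(2,2)$-geometry underlying $\mathcal{GH}(S)$, handling non-degeneracy, total geodesicity, and identification with Weil--Petersson in turn. For non-degeneracy, tangent vectors at $\rho=(\rho_L,\rho_R)$ identify with classes in $H^{1}(\pi_{1}(S),\mathfrak{g}_{\rho})$, and the preferred $\rho$-equivariant scalar product on $\R^{4}$ induces a pointwise positive-definite inner product on the bundle of Lie algebras over the surface. As in Li's argument, the Riemannian metric sends a tangent class to the $L^{2}$-norm of its harmonic representative with respect to this inner product, and harmonicity forces any nonzero class to have strictly positive norm.

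For total geodesicity, I would exploit the swap involution $\iota : (\rho_L,\rho_R) \mapsto (\rho_R,\rho_L)$, whose fixed point set in $\mathcal{GH}(S)$ is precisely the Fuchsian locus. Provided the preferred equivariant scalar product on $\R^{4}$ is intrinsically associated to $\rho$ (for instance built out of the induced metric on the equivariant maximal spacelike surface in $\AdS_{3}$, or out of any other canonical object), it commutes with $\iota$ up to the natural identification of $\R^{4}$ that exchanges the two $\PSL(2,\R)$-factors of $\SO(2,2)$. Hence $\iota$ is an isometry of the Riemannian metric, and the classical fact that the fixed locus of an isometric involution is totally geodesic concludes this step.

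I expect the Weil--Petersson identification to be the main obstacle. At a Fuchsian point $(\rho,\rho)$, the tangent space to the Fuchsian locus is $T_{\rho}\Teich(S)$, whose elements are represented by holomorphic quadratic differentials $\phi$ on $\h^{2}/\rho$. I would first describe the preferred scalar product explicitly at Fuchsian representations (it should reduce, up to a scalar, to a standard hyperbolic inner product on $\R^{4}$), then plug in the cocycle associated to $\phi$, produce its harmonic representative, and verify that the resulting $L^{2}$-norm matches the Weil--Petersson pairing $\int_{\h^{2}/\rho} |\phi|^{2}/\sigma^{2}\, \dVol$ up to a universal multiplicative constant independent of $\rho$. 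Conformal invariance of both sides should fix everything except this single constant, which is then pinned down by a direct computation at a convenient basepoint in $\Teich(S)$.
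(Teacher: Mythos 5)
Your proposal follows essentially the same route as the paper in all three steps: non-degeneracy is immediate from the Hodge-theoretic $L^{2}$-pairing built from the pointwise positive-definite inner product adapted to the maximal surface; the Weil--Petersson identification proceeds exactly as you outline, by showing the Goldman-type representatives $\phi(z)\,dz\otimes \Phi\begin{psmallmatrix} -z & z^2 \\ -1 & z \end{psmallmatrix}$ are themselves harmonic and computing their $L^{2}$-norm via the pointwise identity $\iota_{z}=16y^{2}$ (yielding the constant $32$ directly, with no need for your conformal-invariance argument to pin down the constant); and total geodesicity comes from your swap involution, which the paper realizes concretely as conjugation by $Q=\diag(-1,-1,1,-1)$, using uniqueness of the maximal surface to show that $q_{*}$ preserves both the pairing and harmonicity of forms. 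The only detail you elide is the explicit verification of $\delta$-closedness of the tangent representatives (done in the paper through the $\#$ operator and the matrix $H$), without which the $L^{2}$-norm you compute would not be known to equal the metric $g$.
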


\section{Pressure metric on $\mathcal{GH}(S)$}
In this section we adapt the construction of the pressure metric on the Hitchin component (\cite{pressure_metric}, \cite{pressure_metric_survey}) to the deformation space of globally hyperbolic maximal Cauchy-compact anti-de Sitter manifolds. We will show that the pressure metric is degenerate at the Fuchsian locus along ``pure bending'' directions.

\subsection{Background on anti-de Sitter geometry} We briefly recall some notions of anti-de Sitter geometry that will be used in the sequel.\\

The $3$-dimensional anti-de Sitter space $\AdS_{3}$ is the local model of Lorentzian manifolds of constant sectional curvature $-1$ and can be defined as the set of projective classes of time-like vectors of $\R^{4}$ endowed with a bilinear form of signature $(2,2)$.

We are interested in a special class of spacetimes locally modelled on $\AdS_{3}$, introduced by Mess  (\cite{Mess}), called Globally Hyperbolic Maximal Cauchy-compact (GHMC). This terminology comes from physics and indicates that these spacetimes contain an embedded space-like surface that interesects any inextensible causal curve in exactly one point. From a modern mathematical point of view (\cite{DGK_convex_cocompact_pseudo}), we can describe these manifolds as being convex co-compact anti-de Sitter manifolds diffeomorphic to $S\times \R$, where $S$ is a closed surface of genus at least $2$. This means that, identifying the fundamental group of $S$ with a discrete subgroup $\Gamma$ of $\Isom_{0}(\AdS_{3})\cong \dPSL$ via the holonomy representation $\hol:\pi_{1}(S)\rightarrow \dPSL$, the group $\Gamma$ acts properly disontinuously and co-compactly on a convex domain in $\AdS_{3}$. \\

We denote by $\mathcal{GH}(S)$ the deformation space of globally hyperbolic maximal Cauchy-compact anti-de Sitter structures on $S\times \R$. It turns out that the holonomy of a GHMC anti-de Sitter manifold into $\dPSL$ is faithful and discrete in each factor. Moreover, we have a homeomorphism between $\mathcal{GH}(S)$ and the product $\dTeich$ of two copies of the Teichm\"uller space of $S$ (\cite{Mess}).  In particular, each simple closed curve $\gamma \in \pi_{1}(S)$ is sent by the holonomy representation $\rho=(\rho_{L}, \rho_{R})$ to a pair of hyperbolic isometries of $\h^{2}$, which preserves a space-like geodesic in the convex domain of discontinuity of $\rho$ in $\AdS_{3}$, on which $\rho(\gamma)$ acts by translation by 
\[
    \ell_{\rho}(\gamma)=\frac{1}{2}(\ell_{\rho_{L}}(\gamma)+\ell_{\rho_{R}}(\gamma)) \ .
\]
We will refer to $\ell_{\rho}(\gamma)$ as the translation length of the isometry $\rho(\gamma)$ (see \cite{Tambu_FN}). \\

We will say that the holonomy $\rho: \pi_{1}(S) \rightarrow \dPSL$ of a GHMC anti-de Sitter structure is Fuchsian if, up to conjugation, its left and right projections coincide.

\subsection{Background on thermodynamical formalism}
Let $X$ be a Riemannian manifold. A smooth flow $\phi=(\phi_{t})_{t \in \R}$ is Anosov if there is a flow-invariant splitting 
$TX=E^{s}\oplus E_{0}\oplus E^{u}$, where $E_{0}$ is the bundle parallel to the flow and, for $t\geq 0$, the differential $d\phi_{t}$ exponentially contracts $E^{s}$ and exponentially expands $E^{u}$. We say that $\phi$ is topologically transitive if it has a dense orbit.\\

Given a periodic orbit $a$ for the flow $\phi$, we denote by $\ell(a)$ its period. Let $f:X \rightarrow \R$ be a positive H\"older function. It is possible (\cite{pressure_metric}) to reparametrize the flow $\phi$ and obtain a new flow $\phi^{f}$ with the property that each closed orbit $a$ has period
\[
    \ell_{f}(a):=\int_{0}^{\ell(a)}f(\phi_{s}(x))ds \ \ \ \ \ \ x \in a \ .
\]
We define 
\begin{itemize}
 \item the topological entropy (\cite{Bowen_hyperbolic}) of $f$ as
 \[
    h(f)=\limsup_{T \to +\infty} \frac{\log(|R_{T}(f)|)}{T}
 \]
 where $R_{T}=\{ a \ \text{closed orbit of $\phi$} \ | \ \ell_{f}(a)\leq T\}$;
 \item the topological pressure (\cite{BR_ergodic}) of a H\"older function $g$ (not necessarily positive) as
 \[
    P(g)=\limsup_{T\to +\infty}\frac{1}{T}\log\left(\sum_{a \in R_{T}}e^{\ell_{g}(a)}\right) \ . 
 \]
\end{itemize}
These two notions are related by the following result:
\begin{lemma}[\cite{Sambarino_convex_reps}]\label{lm:entropy} Let $\phi$ be a topologically transitive Anosov flow on $X$ and let $f:X\rightarrow \R$ be a positive H\"older function. Then $P(-hf)=0$ if and only if $h=h(f)$.
\end{lemma}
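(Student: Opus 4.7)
The plan is to establish the lemma in two steps: first, show that $t \mapsto P(-tf)$ is continuous, strictly decreasing, and vanishes at a unique point $t^{*} > 0$; second, identify $t^{*}$ with $h(f)$. Both steps rely on standard tools from the thermodynamic formalism of transitive Anosov flows, namely the variational principle for topological pressure and Abramov's formula for measure-theoretic entropy under time change. These are not recalled in the excerpt, but they are classical inputs available in the references cited.

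For the first step, I would invoke the variational principle for $\phi$, which gives
\[
P(-tf) \;=\; \sup_{\mu} \Bigl\{ h_{\mu}(\phi) - t\int f\, d\mu \Bigr\},
\]
with the supremum running over $\phi$-invariant Borel probability measures. Convexity in $t$ (supremum of affine functions) yields continuity; positivity of $f$ on the compact manifold $X$ gives $\int f\, d\mu \geq \min f > 0$, which yields strict monotonicity in $t$. The value at $t=0$ is the topological entropy of $\phi$, which is positive, and $P(-tf) \to -\infty$ as $t \to +\infty$ by the bound $P(-tf) \leq h_{\mathrm{top}}(\phi) - t\min f$. Hence there exists a unique $t^{*} > 0$ with $P(-t^{*}f) = 0$.

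For the second step, Abramov's formula sets up a bijection $\mu \leftrightarrow \mu^{f}$, with $d\mu^{f} = f\, d\mu / \int f\, d\mu$, between the $\phi$- and $\phi^{f}$-invariant probability measures, under which $h_{\mu^{f}}(\phi^{f}) = h_{\mu}(\phi)/\int f\, d\mu$. Applying the variational principle to the reparametrized flow $\phi^{f}$ with the zero potential, and using that, by the definition in the excerpt, $h(f)$ is the topological entropy of $\phi^{f}$, one gets
\[
h(f) \;=\; \sup_{\mu} \frac{h_{\mu}(\phi)}{\int f\, d\mu},
\]
which is equivalent to $\sup_{\mu}\bigl\{ h_{\mu}(\phi) - h(f)\int f\, d\mu \bigr\} = 0$, that is, $P(-h(f)f) = 0$. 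Uniqueness from the first step then forces $t^{*} = h(f)$, proving both directions of the lemma.

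The main obstacle is precisely the appeal to the variational principle and Abramov's formula, which are not developed in the excerpt. Once these tools are granted, the proof is a formal manipulation: the variational principle converts the pressure equation into a statement about a supremum of affine functionals, and Abramov's formula recognizes that supremum as the topological entropy of the reparametrized flow. No further delicate estimates on orbit counts are needed.
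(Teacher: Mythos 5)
The paper offers no proof of this lemma at all—it is quoted as a known result from the cited reference—and your variational-principle-plus-Abramov argument is exactly the standard proof found in that literature (going back to Bowen--Ruelle), so it is both correct and the expected route. Your one implicit step, the equivalence for topologically transitive Anosov flows between the orbit-counting definitions of $h(f)$ and $P(g)$ used in the paper and their variational counterparts, is a classical theorem and you appropriately flag it as an external input rather than papering over it.
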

 
Consider then the space
\[
    \mathcal{P}(X)=\{f: X \rightarrow (0,+\infty) \ | \ \text{$f$ H\"older}, \ P(f)=0\}
\]
and its quotient $\mathcal{H}(X)$ by the equivalence relation that identifies positive H\"older functions with the same periods. The analitic regularity of the pressure (\cite{PP_zeta}, \cite{thermo_formalism}) allows to define the \emph{pressure metric} on $T_{f}\mathcal{P}(X)$ as
\[
    \|g\|_{P}^{2}=-\frac{\frac{d^{2}}{dt^{2}}P(f+tg)_{|_{t=0}}}{\frac{d}{dt}P(f+tg)_{|_{t=0}}} \ .
\]

\begin{teo}[\cite{PP_zeta}, \cite{thermo_formalism}]\label{thm:pressure} Let $X$ be a Riemannian manifold endowed with a topologically transitive Anosov flow. Then the pressure metric on $\mathcal{H}(X)$ is positive definite.
\end{teo}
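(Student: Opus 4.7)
The plan is to reduce the positive-definiteness of the pressure metric on $\mathcal{H}(X)$ to two classical ingredients from the thermodynamic formalism of topologically transitive Anosov flows: Ruelle's derivative formulas for the pressure, and Livsic's cohomological classification of Hölder potentials with identical period spectra.

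First, I would compute the two derivatives appearing in the definition of $\|g\|_P^2$ at a base point $f\in\mathcal{P}(X)$. By the analyticity of pressure, one has the standard identities
\[
    \frac{d}{dt}P(f+tg)\big|_{t=0} = \int_X g\, d\mu_f, \qquad \frac{d^2}{dt^2}P(f+tg)\big|_{t=0} = \sigma_f^2(g) \ ,
\]
where $\mu_f$ is the equilibrium state of $f$ and $\sigma_f^2(g)\geq 0$ is the asymptotic variance of $g$ along the flow $\phi$. The non-negativity of the variance, together with the sign of the denominator determined by the normalization $P(f)=0$, already yields $\|g\|_P^2\geq 0$ at each $f\in\mathcal{P}(X)$.

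Second, I would characterize the null directions. For a topologically transitive Anosov flow, central limit theorem arguments (equivalently, spectral analysis of the Ruelle--Perron--Frobenius operator applied to a Markov coding of $\phi$) give $\sigma_f^2(g)=0$ precisely when $g$ is Livsic-cohomologous to a constant, in the sense that there exist $c\in\R$ and a Hölder function $\psi$ on $X$ with $g-c = \frac{d}{dt}(\psi\circ\phi_t)|_{t=0}$. Equivalently, the reparametrized periods satisfy $\ell_g(a) = c\cdot \ell(a)$ for every closed orbit $a$ of $\phi$.

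Finally, I would descend to $\mathcal{H}(X)$: Livsic's theorem for topologically transitive Anosov flows asserts that two Hölder potentials have identical periods along all closed orbits if and only if they differ by a coboundary. Consequently, the equivalence relation defining $\mathcal{H}(X)$ collapses exactly the null directions of the pressure quadratic form, with the constant $c$ from the cohomological characterization absorbed by the tangency condition to the level set $\{P=0\}$. The induced form on $T_{[f]}\mathcal{H}(X)$ is therefore strictly positive definite.

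I expect the main technical obstacle to be the precise matching of the cohomological null space of the variance with the period-based equivalence used in the definition of $\mathcal{H}(X)$, together with the correct handling of the additive constant fixed by the normalization $P=0$. Once these are in place, everything else follows from non-negativity of the asymptotic variance and the standard derivative identities for pressure.
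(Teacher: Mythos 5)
This theorem is not proved in the paper: it is quoted from \cite{PP_zeta} and \cite{thermo_formalism} (see also \cite{McMullen_pressure} and \cite{pressure_metric_survey}), so there is no internal argument to compare against. Your reconstruction is the standard proof from those sources and is essentially correct in substance: the derivative identities for the pressure ($\frac{d}{dt}P(f+tg)|_{t=0}=\int_X g\,d\mu_f$ and, for centered variations, $\frac{d^2}{dt^2}P(f+tg)|_{t=0}=\mathrm{Var}(g,\mu_f)\geq 0$); the characterization, valid for topologically transitive Anosov flows via Markov codings and Livsic's theorem, that $\sigma_f^2(g)=0$ if and only if $g$ is cohomologous to a constant $c$, equivalently $\ell_g(a)=c\,\ell(a)$ for every closed orbit $a$; and the observation that tangency to the level set $\{P=0\}$ forces $c=\int_X g\,d\mu_f=0$, so the null cone of the quadratic form is exactly the space of Livsic coboundaries, i.e.\ exactly the directions collapsed by the quotient defining $\mathcal{H}(X)$. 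These are precisely the ingredients of the cited proof.

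One wrinkle you should iron out before calling this complete. Taken literally, the denominator in the paper's formula is $\frac{d}{dt}P(f+tg)|_{t=0}=\int_X g\,d\mu_f$, which vanishes identically for $g\in T_f\mathcal{P}(X)$ --- the very same integral you set to zero in your final step to absorb the constant $c$. So your second paragraph (the denominator has a definite sign) and your last paragraph ($\int_X g\,d\mu_f=0$) cannot both refer to this quantity; as written, your ratio is $0/0$ exactly on the tangent directions where you need it. The standard resolution, which you should make explicit, is to define the pressure form on the pressure-zero locus as $\|g\|_P^2=\mathrm{Var}(g,\mu_f)\big/\big(-\int_X f\,d\mu_f\big)$, recovering the displayed ratio $-P''/P'$ only by differentiating along non-normalized families and renormalizing to pressure zero via the implicit function theorem. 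Note in the same vein that $\mathcal{P}(X)$ as literally defined is empty, since $P(f)\geq h(\phi)>0$ for positive Hölder $f$; the intended space consists of the pressure-zero functions $-h(f)f<0$ in the image of the thermodynamic mapping, and it is this negativity that gives $-\int_X f\,d\mu_f>0$ and hence a positive denominator. With this reading your argument closes up and coincides with the proof in the references.
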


\noindent In particular, given a one parameter family of positive H\"older functions $f_{t}:X \rightarrow \R$, the functions $\Phi(t)=-h(f_{t})f_{t}$ describe a path in $\mathcal{P}(X)$ by Lemma \ref{lm:entropy} and $\| \dot{\Phi}\|_{P}=0$ if and only if $\dot{\Phi}$ has vanishing periods, hence if and only if $\frac{d}{dt}_{|_{t=0}}h(f_{t})\ell_{f_{t}}(a)=0$ for every closed orbit $a$ for $\phi$.

\subsection{Pressure metric on $\mathcal{GH}(S)$} We apply the above theory to the unit tangent bundle $X=T^{1}S$ of a hyperbolic surface $(S, \rho_{0})$ endowed with its geodesic flow $\phi=\phi^{\rho_{0}}$. Here, $\rho_{0}:\pi_{1}(S) \rightarrow \PSL(2,\R)$ is a fixed Fuchsian representation that defines a marked hyperbolic metric on $S$. We also fix an identification of the universal cover $\tilde{S}$ with $\h^{2}$, and, consequently, of the Gromov boundary $\partial_{\infty}\pi_{1}(S)$ of the fundamental group with $S^{1}$. The following facts are well-known from hyperbolic geometry:

\begin{prop}[\cite{pressure_metric_survey}] If $\rho, \eta: \pi_{1}(S) \rightarrow \PSL(2,\R)$ are two Fuchsian representations, then there is a unique $(\rho,\eta)$-equivariant H\"older homeomorphism $\xi_{\rho,\eta}:\partial_{\infty}\h^{2}\rightarrow \partial_{\infty}\h^{2}$ that varies analytically in $\eta$.
\end{prop}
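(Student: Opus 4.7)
The plan is to construct $\xi_{\rho,\eta}$ as the boundary extension of a lifted equivariant quasi-isometry, derive uniqueness from the density of fixed points on the boundary, and finally bootstrap the analytic dependence of individual fixed points to analyticity of the whole map.

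First I would choose a diffeomorphism of $S$ isotopic to the identity and lift it to a $(\rho,\eta)$-equivariant map $\tilde{f}:\h^{2}\to \h^{2}$ between the two marked hyperbolic universal covers. Since both $\rho$ and $\eta$ act co-compactly, $\tilde{f}$ is automatically a quasi-isometry, with constants controlled by the diameters of the fundamental domains. The Morse lemma in the Gromov-hyperbolic space $\h^{2}$ then produces a unique continuous extension $\xi_{\rho,\eta}:\partial_{\infty}\h^{2}\to \partial_{\infty}\h^{2}$ of $\tilde{f}$ to the Gromov boundaries, and the usual comparison between hyperbolic and visual metrics, together with the quasi-isometry bound, makes this extension Hölder. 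Equivariance of $\tilde{f}$ immediately yields $(\rho,\eta)$-equivariance of $\xi_{\rho,\eta}$.

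For uniqueness I would use that any $(\rho,\eta)$-equivariant homeomorphism of $\partial_{\infty}\h^{2}$ must send the attracting fixed point of $\rho(\gamma)$ to the attracting fixed point of $\eta(\gamma)$, for every $\gamma \in \pi_{1}(S)\setminus\{e\}$. Since $\rho$ is discrete and faithful, the set of such fixed points is dense in $S^{1}$, so continuity pins $\xi_{\rho,\eta}$ down uniquely. For analytic dependence on $\eta$, each attracting fixed point $\gamma^{+}_{\eta}$ is a root of the characteristic polynomial of $\eta(\gamma)\in \PSL(2,\R)$, hence varies real-analytically with $\eta$. This already gives analyticity of $\eta \mapsto \xi_{\rho,\eta}(x)$ on the dense set $\{\gamma^{+}_{\rho}\}_{\gamma}$, and the remaining task is to propagate it to every $x$.

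The main obstacle is exactly this propagation. My approach would be to note that the quasi-isometry constants of the straightening $\tilde{f}$ can be chosen to depend continuously on $\eta$ over a compact neighborhood, so that the Hölder exponent and seminorm of $\xi_{\rho,\eta}$ are uniform in $\eta$. Expanding $\eta \mapsto \xi_{\rho,\eta}(\gamma^{+}_{\rho})$ as a convergent power series on a small polydisk in the representation parameters and using the uniform Hölder control to estimate the tails, one concludes by Weierstrass' theorem on uniform limits of analytic functions that $\eta \mapsto \xi_{\rho,\eta}(x)$ is analytic for every $x$. An alternative, perhaps cleaner, route is to characterize $\xi_{\rho,\eta}$ as the unique fixed point of an analytic contraction associated to a Markov coding of the geodesic flow of $\rho$ and invoke the implicit function theorem in a suitable Banach space of Hölder functions.
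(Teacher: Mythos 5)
First, note that the paper does not prove this proposition at all: it is quoted as background with a citation to the survey of Bridgeman--Canary--Sambarino, so your attempt can only be measured against the standard argument (which, in the Anosov-representations literature, runs through your ``alternative route'': the limit map is realized as the fixed point of a contraction depending analytically on the representation, and analyticity follows from an analytic fixed-point/implicit function theorem on a Banach space of H\"older sections). Your existence and uniqueness steps are fine: the equivariant straightening is a quasi-isometry, the Morse lemma gives a boundary extension that is H\"older for the visual metric, and equivariance forces any continuous equivariant map to send attracting fixed points to attracting fixed points, which are dense. (Minor slip: $\gamma^{+}_{\eta}$ is a root of the fixed-point equation $c z^{2}+(d-a)z-b=0$, not of the characteristic polynomial of $\eta(\gamma)$ --- the latter has the eigenvalues as roots --- but the analytic dependence you want is true either way, since $\trace^{2}\neq 4$.)

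The genuine gap is in the propagation step. Weierstrass' theorem on uniform limits is a theorem about \emph{holomorphic} functions; it is false for real-analytic ones --- by Stone--Weierstrass, \emph{every} continuous function of the representation parameters is a uniform limit of real-analytic functions, so your argument, carried out over the real representation variety of $\PSL(2,\R)$ as written, proves only continuity of $\eta\mapsto\xi_{\rho,\eta}(x)$, not analyticity. Likewise, uniform H\"older control in $x$ (uniform over $\eta$) does give uniform convergence in $\eta$ of $\xi_{\rho,\eta}(x_{n})\to\xi_{\rho,\eta}(x)$ along fixed points $x_{n}\to x$ --- that part of the idea is sound --- but it says nothing about tails of a power series and cannot produce real-analyticity of the limit. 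To rescue the argument you must complexify: extend $\eta$ to a complex neighborhood in the $\PSL(2,\C)$-character variety, where nearby representations are quasi-Fuchsian, each $\eta(\gamma)$ is loxodromic with attracting fixed point in $\CP^{1}$ varying \emph{holomorphically}, and the limit maps $\xi_{\rho,\eta}:S^{1}\to\CP^{1}$ still exist with locally uniform H\"older constants (this requires redoing your straightening with orbit maps into $\h^{3}$, since $\tilde{f}:\h^{2}\to\h^{2}$ no longer makes sense). Only then does the Weierstrass/Montel argument apply, and restriction to the real locus gives the claim; the classical shortcut at this point is the Ma\~n\'e--Sad--Sullivan $\lambda$-lemma, which extends the holomorphic motion of the dense set of fixed points to its closure with no uniform H\"older estimates needed. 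Alternatively, your one-sentence contraction/implicit-function-theorem route is the proof that actually appears in the literature, but as stated it is a pointer, not an argument: the delicate point there is precisely that composition operators are not smooth on H\"older spaces, so one must set up the fiber contraction so that analyticity is only required in the parameter direction.
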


\begin{prop}[\cite{pressure_metric_survey}]\label{prop:rep_hyp} For every Fuchsian representation $\eta$, there is a positive H\"older function $f_{\eta}:X\rightarrow \R$ with period $\ell_{f_{\eta}}(\gamma)$ coinciding with the hyperbolic length $\ell_{\eta}(\gamma)$ of the closed geodesic $\gamma$ for the hyperbolic metric induced by $\eta$. Moreover, $f_{\eta}$ varies analytically in $\eta$.
\end{prop}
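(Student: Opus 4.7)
The plan is to build $f_\eta$ from the Busemann cocycle of $\eta$, transported to $X = T^1 S$ via the boundary map $\xi = \xi_{\rho_0,\eta}$ provided by the previous proposition. First I would lift $X$ to the universal cover, identifying $T^1 \h^2$ with (an open subset of) $\partial_\infty \h^2 \times \partial_\infty \h^2 \times \R$ by assigning to a unit vector its pair of endpoints at infinity together with a signed distance from a basepoint. In this parametrization the $\rho_0$-geodesic flow is simply translation in the $\R$-factor, while $\pi_1(S)$ acts via $\rho_0$ on the boundary factors and by translation on $\R$ through the Busemann cocycle $\beta_{\rho_0}$.

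Next I would use $\xi$ to define the pulled-back cocycle $c_\eta(\gamma, x^+) := \beta_\eta\bigl(\eta(\gamma),\, \xi(x^+)\bigr)$, which is a Hölder $\pi_1(S)$-cocycle on $\partial_\infty \h^2$ since $\xi$ is Hölder and $(\rho_0, \eta)$-equivariantly conjugates the two boundary actions. Evaluated at the attracting fixed point of $\rho_0(\gamma)$, one has $c_\eta(\gamma, \gamma^+) = \ell_\eta(\gamma)$, which is precisely the hyperbolic length we want to recover as a period. I would then invoke the standard correspondence between Hölder cocycles over a transitive Anosov flow and Hölder reparametrizations (Livšic's theorem, in the form used by Sambarino in \cite{Sambarino_convex_reps}) to produce a Hölder function $f_\eta: X \to \R$ whose integral along each periodic orbit $a$ associated to $\gamma$ equals $\ell_\eta(\gamma)$. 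Positivity of $f_\eta$ follows, after possibly adding a coboundary (which does not change periods), from positivity of all translation lengths of the Fuchsian representation $\eta$.

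Finally, for the analytic dependence, I would combine the analyticity of $\eta \mapsto \xi_{\rho_0, \eta}$ (previous proposition) with the analyticity of the Busemann cocycle of a Fuchsian representation in $\eta$; both statements are naturally formulated in suitable Banach spaces of Hölder functions on $\partial_\infty \h^2$ and on $\pi_1(S) \times \partial_\infty \h^2$ respectively. The Livšic construction of $f_\eta$ out of $c_\eta$ is then an analytic operation into $C^{0,\alpha}(X)$. The main obstacle is technical rather than conceptual: one must choose a Hölder exponent $\alpha$ uniform along the deformation of $\eta$ and verify that all the building blocks (composition with $\xi_{\rho_0, \eta}$, solving the cohomological equation) are analytic maps between the chosen Banach spaces, so that $\eta \mapsto f_\eta$ is genuinely analytic and not merely smooth or coefficient-wise real-analytic.
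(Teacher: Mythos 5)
The paper offers no proof of this proposition---it is quoted directly from \cite{pressure_metric_survey}---and the route you outline (Hopf parametrization of $T^{1}\h^{2}$, the pulled-back Busemann cocycle $c_{\eta}(\gamma,\cdot)=\beta_{\eta}(\eta(\gamma),\xi_{\rho_{0},\eta}(\cdot))$ with period $c_{\eta}(\gamma,\gamma^{+})=\ell_{\eta}(\gamma)$, the Ledrappier--Sambarino correspondence between H\"older cocycles and H\"older reparametrizations as in \cite{Sambarino_convex_reps}, and analyticity via the analytic dependence of $\eta\mapsto\xi_{\rho_{0},\eta}$ in a fixed H\"older norm) is exactly the standard argument behind the cited statement. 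So in approach you are aligned with the intended proof, and your closing remark about fixing a uniform H\"older exponent along the deformation is the right technical caution for the analyticity claim.

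One step, however, is misjustified as written: positivity. For a H\"older function over a transitive Anosov flow, positivity of \emph{all} periods does not imply that the function is cohomologous to a positive one. Indeed, any positive H\"older representative $g$ satisfies $\ell_{g}(a)\geq(\min g)\,\ell(a)$ for every closed orbit $a$, so the correct criterion (the positive Liv\v{s}ic-type statement, equivalently $\int f\,d\mu>0$ for every invariant probability measure $\mu$, using Sigmund's density of periodic orbit measures) is the \emph{uniform} bound $\inf_{a}\ell_{f}(a)/\ell(a)>0$; mere positivity of each period allows the ratio $\ell_{f}(a)/\ell(a)$ to degenerate to $0$ along long orbits, which obstructs any positive representative. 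In your situation the needed bound reads $\ell_{\eta}(\gamma)\geq c\,\ell_{\rho_{0}}(\gamma)$ for all closed geodesics $\gamma$, and it holds for an elementary reason you should invoke explicitly: the hyperbolic metrics on the compact surface $S$ determined by $\rho_{0}$ and $\eta$ are bi-Lipschitz, so their marked length spectra are uniformly comparable. (Note also that this uniform positivity is precisely the hypothesis under which Sambarino's correspondence applies, so it is needed even to run the cocycle-to-reparametrization step, not only for the final adjustment.) Alternatively, you can sidestep the coboundary adjustment altogether: in the Hopf parametrization the $\eta$-flow transported by $\xi_{\rho_{0},\eta}$ is a monotone time change of the $\rho_{0}$-geodesic flow, and $f_{\eta}$ can be taken to be its infinitesimal time change, which is positive by construction. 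With either repair, your argument is complete and matches the proof in the cited reference.
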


\begin{cor} For every $\rho=(\eta_{L},\eta_{R}) \in \mathcal{GH}(S)$, there exists a positive H\"older function $f_{\rho}:X\rightarrow \R$ such that $\ell_{f_{\rho}}(\gamma)=\ell_{\rho}(\gamma)$ for every simple closed curve $\gamma \in \pi_{1}(S)$.
\end{cor}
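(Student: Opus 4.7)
The plan is to exploit the additive structure of the translation length formula
\[
    \ell_\rho(\gamma)=\tfrac12\bigl(\ell_{\eta_L}(\gamma)+\ell_{\eta_R}(\gamma)\bigr)
\]
together with the linearity of the reparametrization integral $\ell_f(a)=\int_0^{\ell(a)}f(\phi_s(x))\,ds$ in $f$. Given $\rho=(\eta_L,\eta_R)\in\mathcal{GH}(S)$, Proposition \ref{prop:rep_hyp} furnishes positive Hölder functions $f_{\eta_L},f_{\eta_R}:X\to\R$ whose reparametrized periods coincide with the hyperbolic lengths $\ell_{\eta_L}(\gamma)$ and $\ell_{\eta_R}(\gamma)$ respectively (and which depend analytically on the factor representations).

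The natural candidate is therefore
\[
    f_\rho := \tfrac12\bigl(f_{\eta_L}+f_{\eta_R}\bigr).
\]
First I would observe that $f_\rho$ is the sum of two positive Hölder functions divided by $2$, hence itself positive and Hölder. Next, for any periodic orbit $a$ of the geodesic flow $\phi$ on $X=T^1S$ associated to a simple closed curve $\gamma$, linearity of the integral gives
\[
    \ell_{f_\rho}(a)=\int_0^{\ell(a)}\tfrac12\bigl(f_{\eta_L}+f_{\eta_R}\bigr)(\phi_s(x))\,ds
    =\tfrac12\bigl(\ell_{f_{\eta_L}}(a)+\ell_{f_{\eta_R}}(a)\bigr)
    =\tfrac12\bigl(\ell_{\eta_L}(\gamma)+\ell_{\eta_R}(\gamma)\bigr),
\]
which is precisely $\ell_\rho(\gamma)$ by the definition recalled in the background subsection.

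There is essentially no obstacle: the construction is a direct averaging of the two reparametrization functions supplied by the Fuchsian case, and all relevant properties (positivity, Hölder regularity, correct periods, and analytic dependence on $\rho$ inherited from Proposition \ref{prop:rep_hyp}) are preserved under this convex combination. The only point worth emphasizing is that the statement is formulated for simple closed curves, but the argument works verbatim for every conjugacy class in $\pi_1(S)$, which will be needed later when applying the thermodynamic formalism to all closed orbits of $\phi$.
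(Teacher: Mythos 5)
Your proposal is correct and coincides with the paper's own proof: the paper likewise sets $f_{\rho}=\frac{1}{2}(f_{\eta_{L}}+f_{\eta_{R}})$, invokes the additivity of the translation length formula and the linearity of the reparametrized period in $f$, and notes that analytic dependence on $\rho$ follows from Proposition \ref{prop:rep_hyp}. Your additional remarks (positivity and H\"older regularity of the average, validity for all conjugacy classes) are accurate but only make explicit what the paper leaves implicit.
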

\begin{proof} Recall that $\ell_{\rho}(\gamma)=\frac{1}{2}(\ell_{\eta_{L}}(\gamma)+\ell_{\eta_{R}}(\gamma))$, thus it is sufficient to choose $f_{\rho}=\frac{1}{2}(f_{\eta_{L}}+f_{\eta_{R}})$. Moreover, $f_{\rho}$ varies analytically in $\rho$ by Proposition \ref{prop:rep_hyp}.
\end{proof}

\noindent We can then introduce the termodynamic mapping:
\begin{align*}
    \Phi:\mathcal{GH}(S)&\longrightarrow \mathcal{P}(X)\\
        \rho &\mapsto -h(f_{\rho})f_{\rho} \ .
\end{align*}
By pulling-back the pressure metric via $\Phi$, we obtain a semi-definite metric on $\mathcal{GH}(S)$, which we still call pressure metric.

\begin{prop}\label{prop:pressure_Fuchsian} The restriction of the pressure metric to the Fuchsian locus in $\mathcal{GH}(S)$ is a constant multiple of the Weil-Petersson metric.
\end{prop}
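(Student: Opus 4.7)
The plan is to observe that on the Fuchsian locus the thermodynamic mapping $\Phi$ reduces to the classical thermodynamic mapping used in McMullen's work, and then invoke his theorem.

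First I would identify the Fuchsian locus with Teichmüller space via the diagonal embedding $\iota:\Teich(S)\hookrightarrow \mathcal{GH}(S)$, $\eta\mapsto (\eta,\eta)$. A tangent vector to the Fuchsian locus at $(\eta,\eta)$ is then of the form $d\iota(v)=(v,v)$ for $v\in T_{\eta}\Teich(S)$. For a smooth path $\eta_t$ in $\Teich(S)$ with $\dot\eta_0=v$, set $\rho_t=(\eta_t,\eta_t)$. Using the formula $f_\rho=\tfrac{1}{2}(f_{\eta_L}+f_{\eta_R})$ from the corollary, one gets $f_{\rho_t}=f_{\eta_t}$ identically along this path. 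Consequently the translation lengths also coincide, $\ell_{\rho_t}(\gamma)=\ell_{\eta_t}(\gamma)$, so the entropies agree: $h(f_{\rho_t})=h(f_{\eta_t})$ (both equal to $1$ by Lemma \ref{lm:entropy} applied to the hyperbolic length cocycle).

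Therefore the restriction $\Phi\circ\iota:\Teich(S)\to\mathcal{P}(X)$ is precisely the thermodynamic mapping $\eta\mapsto -h(f_\eta)f_\eta$ studied by Bridgeman--Canary--Labourie--Sambarino and originally, in essence, by McMullen. The pressure metric on $\mathcal{GH}(S)$ restricted to the Fuchsian locus is the pullback of the pressure metric on $\mathcal{P}(X)$ by $\Phi\circ\iota$, which by definition coincides with the pressure metric on $\Teich(S)$.

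To conclude I would cite McMullen's theorem (\cite{McMullen_pressure}; see also \cite{pressure_metric_survey}) stating that this pressure metric on $\Teich(S)$ is a positive constant multiple of the Weil-Petersson metric. This yields the claim. The only subtlety worth highlighting is that no extra scaling factor appears despite the $\tfrac{1}{2}$ in the definition of $f_\rho$: since both entries of the Fuchsian pair vary by the same tangent vector $v$, the two halves of $f_\rho$ contribute a full derivative $\dot f_\eta(v)$ rather than half, so the induced metric matches the McMullen pressure metric without rescaling (up to the universal constant already present in McMullen's identification). There is no genuine obstacle; the argument is essentially a compatibility check between two definitions.
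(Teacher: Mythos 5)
Your proposal is correct and takes essentially the same route as the paper: along a Fuchsian path $\rho_{t}=(\eta_{t},\eta_{t})$ one has $f_{\rho_{t}}=f_{\eta_{t}}$ and $h(f_{\rho_{t}})=1$, so $\Phi(\rho_{t})=-f_{\eta_{t}}$ and the conclusion follows from McMullen's identification of the resulting pressure metric on Teichm\"uller space with a multiple of the Weil--Petersson metric. One small attribution quibble: the fact $h(f_{\eta})=1$ is not a consequence of Lemma~\ref{lm:entropy} alone (that lemma only characterizes $h(f)$ by $P(-hf)=0$); it is the classical statement that the geodesic-flow entropy of a closed hyperbolic surface equals $1$, for which the paper cites \cite{Tambu_rays} and \cite{GM}.
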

\begin{proof} Let $\rho_{t}=(\eta_{t}, \eta_{t})$ be a path on the Fuchsian locus. Then
\[
    \Phi(\rho_{t})=-h(f_{\rho_{t}})f_{\rho_{t}}=-h(f_{\rho_{t}})f_{\eta_{t}}=-f_{\eta_{t}} \ ,
\]
where in the last step we used the fact that the entropy of a Fuchsian representation is $1$ (\cite{Tambu_rays}, \cite{GM}). Therefore, $d\Phi(\dot\rho_{0})=-\dot{f}_{\eta_{0}}$ and the result follows from (\cite{McMullen_pressure}). 
\end{proof}

\begin{defi} Let $\rho=(\eta, \eta)$ be a Fuchsian representation. We say that a tangent vector $w\in T_{\rho}\mathcal{GH}(S)$ is a pure bending direction if $w=(v, -v)$ for some $v\in T_{\eta}\Teich(S)$.
\end{defi}

\begin{lemma} The pressure metric on $\mathcal{GH}(S)$ is degenerate on the Fuchsian locus along pure bending directions.
\end{lemma}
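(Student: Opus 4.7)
The plan is to unpack the characterization of pressure-metric degeneracy stated immediately after Theorem \ref{thm:pressure} and reduce the problem to showing that two scalar quantities, a length derivative and an entropy derivative, both vanish in pure bending directions. Concretely, for a smooth path $\rho_t$ with $\rho_0=(\eta,\eta)$ and $\dot\rho_0=(v,-v)$, the text tells us that $\|d\Phi(\dot\rho_0)\|_P=0$ if and only if
\[
  \frac{d}{dt}\Big|_{t=0}\!\bigl(h(f_{\rho_t})\,\ell_{\rho_t}(\gamma)\bigr)=0
\]
for every $\gamma\in\pi_1(S)$. By the Leibniz rule and the fact that $h(f_{\rho_0})=1$ (Proposition \ref{prop:pressure_Fuchsian}, via \cite{Tambu_rays},\cite{GM}), this equals $\dot h(0)\,\ell_\eta(\gamma)+\dot\ell_{\rho_0}(\gamma)$, so it suffices to prove that each summand is zero.

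First, I would handle the length derivative. Writing $\rho_t=(\eta^L_t,\eta^R_t)$ with $\dot\eta^L_0=v$ and $\dot\eta^R_0=-v$, the definition $\ell_{\rho_t}(\gamma)=\tfrac12(\ell_{\eta^L_t}(\gamma)+\ell_{\eta^R_t}(\gamma))$ and linearity of $d\ell_\eta(\gamma)$ on $T_\eta\Teich(S)$ give
\[
  \dot\ell_{\rho_0}(\gamma)=\tfrac12\bigl(d\ell_\eta(\gamma)[v]+d\ell_\eta(\gamma)[-v]\bigr)=0.
\]
This is the direct analog of the fact that bending deformations are first-order length-preserving in the quasi-Fuchsian setting, and uses nothing beyond the additive formula for $\ell_\rho(\gamma)$ recalled in the background section.

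Second, I would address the entropy term, which is the main point. The idea is to observe that the topological entropy of $f_\rho$ attains its maximum value $1$ precisely on the Fuchsian locus: by \cite{Tambu_rays},\cite{GM}, $h(f_\rho)\le 1$ on all of $\mathcal{GH}(S)$ with equality exactly when $\rho$ is Fuchsian. Since $\rho_0=(\eta,\eta)$ is a global maximum of the smooth function $\rho\mapsto h(f_\rho)$, its differential vanishes at $\rho_0$ in every tangent direction, and in particular $\dot h(0)=0$. The main obstacle is precisely this entropy-rigidity input; once it is invoked (together with the analyticity of $\rho\mapsto f_\rho$ and of the pressure, used to guarantee that $h$ is a differentiable function of the parameter), combining the two vanishings yields $\frac{d}{dt}|_{t=0}(h(f_{\rho_t})\ell_{\rho_t}(\gamma))=0$ for every $\gamma$, and therefore the pressure norm of any pure bending direction $(v,-v)$ vanishes.
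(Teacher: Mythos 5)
Your proposal is correct and follows essentially the same route as the paper: reduce pressure-metric degeneracy to the vanishing of $\frac{d}{dt}\big|_{t=0}\bigl(h(f_{\rho_t})\ell_{\rho_t}(\gamma)\bigr)$, kill the entropy derivative using the fact that the entropy is maximal and equal to $1$ at the Fuchsian locus, and kill the length derivative via the averaging formula $\ell_{\rho}(\gamma)=\tfrac12(\ell_{\eta_L}(\gamma)+\ell_{\eta_R}(\gamma))$ applied to the direction $(v,-v)$. The paper's proof compresses these two vanishings into one product-rule computation (citing \cite{Tambu_rays}, \cite{CTT} for entropy maximality), but the mathematical content is identical to yours.
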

\begin{proof} Let $\rho_{t}=(\eta_{L,t}, \eta_{R,t})$ be a path in $\mathcal{GH}(S)$ such that $\rho_{0}$ is Fuchsian and $\dot{\rho}_{0}=\frac{d}{dt}_{|_{t=0}}\rho_{t}=(v,-v)$ for some $v\in T_{\eta_{0}}\Teich(S)$. By definition of the pressure metric and Theorem \ref{thm:pressure}, we have $\| d\Phi(\dot{\rho}_{0})\|=0$ if and only if $\frac{d}{dt}_{|_{t=0}}h(\rho_{t})\ell_{\rho_{t}}(\gamma)=0$ for every closed geodesic $\gamma$ on $S$. By the product rule and the fact that the entropy is maximal and equal to $1$ at the Fuchsian locus (\cite{Tambu_rays}, \cite{CTT}), we get
\begin{align*}
 \frac{d}{dt}_{|_{t=0}}h(\rho_{t})\ell_{\rho_{t}}(\gamma)&=\frac{d}{dt}_{|_{t=0}}\ell_{\rho_{t}}(\gamma) \\
 &=\frac{1}{2}\left(\frac{d}{dt}_{|_{t=0}}\ell_{\eta_{L,t}}(\gamma)+\frac{d}{dt}_{|_{t=0}}\ell_{\eta_{R,t}}(\gamma)\right)\\
 &=\frac{1}{2}(d\ell_{\eta_{0}}(v)+d\ell_{\eta_{0}}(-v))=0 \ .
\end{align*}
\end{proof}

\begin{remark}\label{rmk:Bridgeman}As remarked in \cite{B_pressure}, we note that, along a general path $\rho_{t} \in \mathcal{GH}(S)$, the condition $\frac{d}{dt}_{|_{t=0}}h(\rho_{t})\ell_{\rho_{t}}(\gamma)=0$ for every closed geodesic $\gamma$ is equivalent to the existence of a constant $k \in \R$ such that
\[
 \frac{d}{dt}_{|_{t=0}}\ell_{\rho_{t}}(\gamma)=k\ell_{\rho_{0}}(\gamma) \ .
\]
In fact, $k=-\frac{1}{h(\rho_{0})}\frac{d}{dt}_{|_{t=0}}h(\rho_{t})$.
\end{remark}
 
\begin{lemma}\label{lm:technical} Let $v\in T_{\rho}\mathcal{GH}(S)$ be a non-zero vector.
If there exists $k \in \R$ such that
\begin{equation}\label{eq:length}
 \frac{d}{dt}_{|_{t=0}}\ell_{\rho_{t}}(\gamma)=k\ell_{\rho_{0}}(\gamma) \ 
\end{equation}
for every closed geodesic $\gamma$, then $k=0$ or $\rho$ is Fuchsian.
\end{lemma}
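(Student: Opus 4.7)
The plan is to recast the hypothesis as an identity of geodesic currents and exploit the rigidity of Liouville currents via Bonahon's intersection form. Writing $v=(u,w)\in T_{\eta_{L}}\Teich(S)\oplus T_{\eta_{R}}\Teich(S)$ under Mess's identification $\mathcal{GH}(S)\simeq\Teich(S)\times\Teich(S)$, and using $\ell_{\rho}=\frac{1}{2}(\ell_{\eta_{L}}+\ell_{\eta_{R}})$, the hypothesis reads
\[
d\ell_{\eta_{L}}(u)(\gamma)+d\ell_{\eta_{R}}(w)(\gamma)=k\bigl(\ell_{\eta_{L}}(\gamma)+\ell_{\eta_{R}}(\gamma)\bigr)
\]
for every closed geodesic $\gamma$. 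By the density of weighted closed multi-curves in the space of geodesic currents and the non-degeneracy of Bonahon's intersection form $i(\cdot,\cdot)$, this extends to the identity
\[
\dot{L}_{\eta_{L}}(u)+\dot{L}_{\eta_{R}}(w)=k\bigl(L_{\eta_{L}}+L_{\eta_{R}}\bigr)
\]
of transverse H\"older cocycles, where $L_{\eta}$ denotes the Liouville current of $\eta$.

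Next I would pair both sides with $L_{\eta_{L}}$ and $L_{\eta_{R}}$. The key input is that $i(L_{\eta},L_{\eta})$ depends only on the topology of $S$, so differentiating gives $i(L_{\eta},\dot{L}_{\eta}(u))=0$ for every $u\in T_{\eta}\Teich(S)$. Combined with the symmetry of $i$, this yields
\[
i(L_{\eta_{L}},\dot{L}_{\eta_{R}}(w))=i(L_{\eta_{R}},\dot{L}_{\eta_{L}}(u))=k\bigl(i(L_{\eta_{L}},L_{\eta_{L}})+I\bigr),
\]
with $I:=i(L_{\eta_{L}},L_{\eta_{R}})$, and summing gives
\[
d_{(u,w)}I=2k\bigl(i(L_{\eta_{L}},L_{\eta_{L}})+I\bigr).
\]

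When $\rho$ is Fuchsian, Bonahon's rigidity inequality (with equality precisely on the diagonal) shows that $I$ attains its global minimum on $\eta_{L}=\eta_{R}$; hence $dI$ vanishes in every direction at $\rho$, and since the coefficient on the right is strictly positive, one forces $k=0$. In the non-Fuchsian case $dI(u,w)$ need not vanish, so closing the argument requires testing the current identity against $L_{\eta_{0}}$ for \emph{arbitrary} $\eta_{0}\in\Teich(S)$, which produces the family of relations
\[
i(L_{\eta_{0}},\dot{L}_{\eta_{L}}(u))+i(L_{\eta_{0}},\dot{L}_{\eta_{R}}(w))=k\bigl(i(L_{\eta_{0}},L_{\eta_{L}})+i(L_{\eta_{0}},L_{\eta_{R}})\bigr)
\]
valid uniformly in $\eta_{0}$. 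Since $k$ is constant while the right-hand side varies real-analytically and non-trivially in $\eta_{0}$ whenever $\eta_{L}\neq\eta_{R}$, a rigidity argument on the Liouville map $\eta\mapsto L_{\eta}$ should force $k=0$. This last step, essentially a rigidity statement for the analytic map $\eta\mapsto L_{\eta}$, is the part I expect to be the main obstacle.
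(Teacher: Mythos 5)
There is a genuine gap, and you name it yourself: the step that ``should force $k=0$'' in the non-Fuchsian case is precisely the entire content of the lemma, and your proposal contains no argument for it. Note that the conclusion is the disjunction ``$k=0$ \emph{or} $\rho$ is Fuchsian,'' so the Fuchsian case --- the only case you actually close, via $dI=0$ along the diagonal minimum of $(\eta_{1},\eta_{2})\mapsto i(L_{\eta_{1}},L_{\eta_{2}})$ --- is vacuous for the statement: if $\rho$ is Fuchsian there is nothing to prove. Worse, your final ``family of relations'' obtained by testing against arbitrary $\eta_{0}$ carries no new information: once you have upgraded the period identity to the cocycle identity $\dot{L}_{\eta_{L}}(u)+\dot{L}_{\eta_{R}}(w)=k(L_{\eta_{L}}+L_{\eta_{R}})$, pairing with any $L_{\eta_{0}}$ is an automatic consequence of that identity, with both sides varying in $\eta_{0}$ simultaneously; the constancy of $k$ produces no tension. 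What one would really need is to show that $L_{\eta_{L}}+L_{\eta_{R}}$ (with $\eta_{L}\neq\eta_{R}$) is not tangent to the image of the product Liouville map --- and that is exactly the rigidity you concede you cannot prove. There are also secondary technicalities you elide: pairing a derivative of Liouville currents (a transverse H\"older distribution) against a full-support current such as $L_{\eta_{0}}$, and deducing the cocycle identity from vanishing of periods on closed geodesics, both go beyond the off-the-shelf continuity of Bonahon's intersection form on currents.

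For comparison, the paper avoids currents entirely and argues by elementary matrix asymptotics, following Bridgeman's Lemma 7.4. Since $\rho=(\rho_{1},\rho_{2})$ is not Fuchsian, there is a curve $\alpha$ with $\lambda_{1}(0)\neq\lambda_{2}(0)$ (different eigenvalues for the left and right holonomies); one applies the hypothesis (\ref{eq:length}) to the curves $\gamma_{n}=\alpha^{n}\beta$, expands $\log\mu_{i,n}(t)=n\log\lambda_{i}(t)+\log a_{i}(t)+O(\lambda_{i}(t)^{-2n})$, kills the order-$n$ term using (\ref{eq:length}) for $\alpha$ itself, and then multiplies the resulting expansion by $\lambda_{2}^{2n}/n$ and $\lambda_{1}^{2n}/n$ and lets $n\to+\infty$ to extract $\lambda_{1}'=\lambda_{2}'=0$. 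This gives $\frac{d}{dt}_{|_{t=0}}\ell_{\rho_{t}}(\alpha)=0$, whence $k=0$ since $\ell_{\rho_{0}}(\alpha)>0$. If you want to salvage the currents approach, the missing ingredient is an analogue of this eigenvalue-separation argument in the language of Liouville currents, and it is not clear it exists in the literature.
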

\begin{proof}We show that if $\rho$ is not Fuchsian, then $k$ is necessarily $0$. The proof follows the line of \cite[Lemma 7.4]{B_pressure}. Let $v=(v_{1},v_{2})$ and $\rho_{t}=(\rho_{1,t}, \rho_{2,t})$. Choose simple closed curves $\alpha$ and $\beta$ in $S$. Up to conjugation we can assume that 
\[
    A_{i}(t)=\rho_{i,t}(\alpha)=\begin{pmatrix} \lambda_{i}(t) & 0 \\ 0 & \lambda_{i}(t)^{-1}
    \end{pmatrix} \ ,
\]
where we denoted by $\lambda_{i}(t)$ the largest eigenvalue of the hyperbolic isometry $\rho_{i,t}(\alpha)$. 
Let 
\[
    B_{i}(t)=\rho_{i,t}(\beta)=\begin{pmatrix} a_{i}(t) & b_{i}(t) \\ c_{i}(t) & d_{i}(t)
    \end{pmatrix}
\]
such that $\det(B_{i}(t))=1$ and $\trace(B_{i}(t))>2$. Notice that $b_{i}(t)c_{i}(t)\neq 0$ because $B_{i}(t)$ is hyperbolic and $A_{i}(t)$ and $B_{i}(t)$ have different axis. For every $n\geq 0$, we consider the matrices
\[
    C_{i,n}(t)=A_{i}^{n}(t)B_{i}(t)=\rho_{i,t}(\gamma_{n})=\begin{pmatrix} \lambda_{i}(t)^{n}a_{i}(t) & \lambda_{i}(t)^{n}b_{i}(t) \\ \lambda_{i}(t)^{-n}c_{i}(t) & \lambda_{i}(t)^{-n}d_{i}(t)
    \end{pmatrix}
\]
associated to some closed curves $\gamma_{n}$ on $S$. The eigenvalues $\mu_{i,n}$ of $C_{i,n}(t)$ satisfy
\[
 \log(\mu_{i,n}(t))=n\log(\lambda_{i}(t))+\log(a_{i}(t))+\lambda_{i}(t)^{-2n}\left(\frac{a_{i}(t)d_{i}(t)-1}{a_{i}(t)^{2}}\right)+O(\lambda_{i}(t)^{-4n})
\]
as $n \to +\infty$.
Applying Equation (\ref{eq:length}) to the curves $\gamma_{n}$, we obtain
\begin{align*}
    0&=\frac{d}{dt}_{|_{t=0}}\ell_{\rho_{t}}(\gamma_{n})-k\ell_{\rho_{0}}(\gamma_{n})\\ 
     &= n\log(\lambda_{1}\lambda_{2})'-kn\log(\lambda_{1}\lambda_{2})\\
     &+\log(a_{1}a_{2})'-k\log(a_{1}a_{2})\\
     &-2n\left[\lambda_{1}^{-2n-1}\lambda_{1}'\left(\frac{a_{1}d_{1}-1}{a_{1}^{2}}\right)+\lambda_{2}^{-2n-1}\lambda_{2}'\left(\frac{a_{2}d_{2}-1}{a_{2}^{2}}\right)\right]\\
     &+\lambda_{1}^{-2n}\left[\left(\frac{a_{1}d_{1}-1}{a_{1}^{2}}\right)'-k\left(\frac{a_{1}d_{1}-1}{a_{1}^{2}}\right)\right]\\
     &+\lambda_{2}^{-2n}\left[\left(\frac{a_{2}d_{2}-1}{a_{2}^{2}}\right)'-k\left(\frac{a_{2}d_{2}-1}{a_{2}^{2}}\right)\right]+o(\lambda_{i}^{-2n})
\end{align*}
 where all derivatives and all functions are intended to be taken and evaluated at $t=0$.
The term $n\log(\lambda_{1}\lambda_{2})'-kn\log(\lambda_{1}\lambda_{2})$ vanishes by assumption because
\[
    \ell_{\rho_{t}}(\alpha)=\frac{1}{2}(\ell_{\rho_{1,t}}(\alpha)+\ell_{\rho_{2,t}}(\alpha))=\log(\lambda_{1}(t)\lambda_{2}(t))
\]
and Equation (\ref{eq:length}) holds for the curves $\alpha$.
Taking the limit of the above expression as $n\to +\infty$, we deduce that $\log(a_{1}a_{2})'-k\log(a_{1}a_{2})=0$. Because $\rho$ is not Fuchsian, we can assume to have chosen $\alpha$ and $\beta$ so that $\lambda_{1}(0)>\lambda_{2}(0)$. Then if we multiply the equation above by $\frac{\lambda_{2}^{2n}}{n}$ and take the limit as $n \to +\infty$, we deduce that $\lambda_{2}'=0$. Similarly, multiplying by $\frac{\lambda_{1}^{2n}}{n}$, we find that $\lambda_{1}'=0$. 
Therefore,
\[
    \frac{d}{dt}_{|_{t=0}}\ell_{\rho_{t}}(\alpha)=\frac{d}{dt}_{|_{t=0}}\log(\lambda_{1}(t)\lambda_{2}(t))=0 \ ,
\]
hence $k=0$.
\end{proof}

\begin{teo} Let $v=(v_{L}, v_{R})\in T_{\rho}\mathcal{GH}(S)$ be a non-zero tangent vector such that $\| d\Phi(v)\|=0$. Then $\rho$ is Fuchsian and $v$ is a pure bending direction.
\end{teo}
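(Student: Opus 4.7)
The plan is to reduce the vanishing $\|d\Phi(v)\|=0$ to a length–spectrum identity via Remark \ref{rmk:Bridgeman}, apply Lemma \ref{lm:technical} to dichotomize, and then handle the Fuchsian and non-Fuchsian sub-cases separately. For any smooth path $\rho_t\in\mathcal{GH}(S)$ with $\dot\rho_0=v$, the characterization of the pressure metric immediately after Theorem \ref{thm:pressure} rewrites the hypothesis as $\frac{d}{dt}_{|_{t=0}}h(\rho_t)\ell_{\rho_t}(\gamma)=0$ for every closed geodesic $\gamma$; Remark \ref{rmk:Bridgeman} then converts this into the existence of a constant $k\in\R$ with
\[
    \frac{d}{dt}_{|_{t=0}}\ell_{\rho_{t}}(\gamma)=k\,\ell_{\rho_{0}}(\gamma)\quad\text{for every closed geodesic }\gamma,
\]
and Lemma \ref{lm:technical} forces either $k=0$ or $\rho$ Fuchsian.

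First, I would rule out the non-Fuchsian alternative by contradiction. Writing $\rho=(\eta_L,\eta_R)$ and assuming $\rho$ is not Fuchsian, we have $k=0$ and $\frac{d}{dt}_{|_{t=0}}\ell_{\rho_t}(\gamma)=0$ for every $\gamma$. I would re-run the asymptotic analysis of the proof of Lemma \ref{lm:technical} with $k=0$ for an arbitrary pair of simple closed curves $\alpha,\beta$ with $\ell_{\eta_L}(\alpha)\neq\ell_{\eta_R}(\alpha)$ and $\beta$ having different axes from $\alpha$ in both factors. Successively multiplying the resulting vanishing expansion of $\frac{d}{dt}_{|_{t=0}}\ell_{\rho_t}(\gamma_n)$ by $\lambda_2^{2n}/n$ and then by $\lambda_1^{2n}/n$ and letting $n\to\infty$ yields $\lambda_1'(0)=\lambda_2'(0)=0$, equivalently the vanishing of the first variations of both $\ell_{\eta_{L,t}}(\alpha)$ and $\ell_{\eta_{R,t}}(\alpha)$ at $t=0$, for every such ``good'' $\alpha$. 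A spanning argument for the resulting length-function differentials then forces $v_L=v_R=0$, contradicting $v\neq 0$.

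In the Fuchsian case $\rho=(\eta,\eta)$, the entropy attains its maximum $h(\rho_0)=1$ on the Fuchsian locus (\cite{Tambu_rays}, \cite{CTT}), so $\frac{d}{dt}_{|_{t=0}}h(\rho_t)=0$ along any variation and hence $k=0$ here as well. The length identity becomes $d\ell_\eta(v_L)+d\ell_\eta(v_R)=0$ for every simple closed curve $\gamma$, and since the differentials of length functions of simple closed curves span $T^*_\eta\Teich(S)$, this yields $v_L+v_R=0$. Therefore $v=(v_L,-v_L)$ is a pure bending direction.

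The main technical obstacle is the spanning step in the non-Fuchsian case: the set of simple closed curves along which $\ell_{\eta_L}$ and $\ell_{\eta_R}$ coincide is at most countable and proper (since $\eta_L\neq\eta_R$), yet one still needs the differentials $d\ell_\alpha$ of the remaining curves to span the Teichmüller cotangent space at $\eta_L$ and at $\eta_R$. I would justify this by extending length functions to the space $ML(S)$ of measured laminations, using that weighted simple closed curves are dense therein, that $d\ell_\mu$ depends continuously on $\mu$, and that the countable ``bad'' set does not destroy density — so any hypothetical non-trivial relation $d\ell_\alpha|_{\eta_L}(u)=0$ over the good $\alpha$'s would propagate by continuity to all of $ML(S)$, forcing $u=0$.
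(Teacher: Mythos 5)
Your reduction of $\|d\Phi(v)\|=0$ to the length-spectrum identity via Remark \ref{rmk:Bridgeman}, the dichotomy from Lemma \ref{lm:technical}, and your handling of the Fuchsian case (criticality of the entropy at its maximum gives $k=0$, then the spanning of $\{d\ell_{\eta}(\gamma)\}_{\gamma}$ gives $v_{R}=-v_{L}$) all match the paper's proof. The genuine gap is in the non-Fuchsian case, at precisely the step you flag as the ``main technical obstacle''. Countability of the bad set $\{\alpha:\ell_{\eta_{L}}(\alpha)=\ell_{\eta_{R}}(\alpha)\}$ does not help: the set of \emph{all} weighted simple closed curves is itself only a countable union of rays in $ML(S)$, so deleting a countable subfamily can perfectly well destroy density. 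A priori the bad curves could accumulate onto an open cone $U\subset ML(S)$; by continuity of $\mu\mapsto \ell_{\eta_{L}}(\mu)-\ell_{\eta_{R}}(\mu)$ this would only give $\ell_{\eta_{L}}\equiv\ell_{\eta_{R}}$ on $U$, and to derive a contradiction from that you need a rigidity statement — equality of length functions on an open subset of $ML(S)$ forces $\eta_{L}=\eta_{R}$ — which does not follow from continuity and would require serious extra input (e.g.\ real-analyticity of length functions in train-track charts, à la Bonahon), which you neither cite nor prove. Your ``propagate by continuity'' step also presupposes exactly the density you are trying to establish, so the argument is circular as written. Note too that you throw away all information carried by the bad curves, for which the identity $\ell_{\rho}'(\gamma)=0$ still yields $\ell_{\rho_{L}}'(\gamma)=-\ell_{\rho_{R}}'(\gamma)$.

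The paper closes the non-Fuchsian case by a different idea that sidesteps the density issue entirely: it passes from length functions to trace functions in $\SO_{0}(2,2)$. Writing $\rho_{t}(\gamma)$ as conjugate to $\exp\bigl(\tfrac{1}{2}\diag(\ell_{L}+\ell_{R},\,\ell_{L}-\ell_{R},\,\ell_{R}-\ell_{L},\,-\ell_{L}-\ell_{R})\bigr)$, one checks that $\frac{d}{dt}_{|_{t=0}}\trace(\rho_{t}(\gamma))=0$ in \emph{both} cases: for good curves because $\ell_{L}'=\ell_{R}'=0$, and for bad curves because $\ell_{L}=\ell_{R}$ and $\ell_{L}'=-\ell_{R}'$ make all four terms of the derivative cancel. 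Since $\rho$ is generic in the sense of \cite[Proposition 10.3]{pressure_metric}, the differentials $\{d\trace(\rho(\gamma))\}_{\gamma}$ span $T^{*}_{\rho}\mathcal{GH}(S)$, forcing $v=0$, a contradiction. To repair your proposal you must either supply the $ML(S)$-rigidity input described above or replace the spanning-over-good-curves step with this trace-plus-genericity argument.
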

\begin{proof} Let $\rho_{t}$ be a path in $\mathcal{GH}(S)$ such that $\rho_{0}=\rho$ and $\rho_{t}$ is tangent to $v$. If $\rho=(\eta, \eta)$ is Fuchsian, then, combining Remark \ref{rmk:Bridgeman} with the fact that the entropy is maximal and equal to $1$ at the Fuchsian locus, we get
\[
  0=\frac{d}{dt}_{|_{t=0}}h(\rho_{t})\ell_{\rho_{t}}(\gamma)=\frac{d}{dt}_{|_{t=0}}\ell_{\rho_{t}}(\gamma)
\]
for every simple closed geodesic $\gamma$ in $S$. Therefore, 
\[
 0=\frac{d}{dt}_{|_{t=0}}\ell_{\rho_{t}}(\gamma)=\frac{1}{2}(d\ell_{\eta}(\gamma)(v_{L})+d\ell_{\eta}(\gamma)(v_{R}))
\]
from which we deduce that $v_{R}=-v_{L}$, because $\{d\ell_{\eta}(\gamma)\}_{\gamma}$ generates $T^{*}_{\eta}\Teich(S)$. Hence, $v$ is a pure-bending direction.\\
We are thus left to show that $\rho$ is necessarily Fuchsian. Suppose it is not and denote with $\rho_{L}\neq \rho_{R}$ the projections of $\rho$. By the previous lemma
\[
 \ell_{\rho}'(\gamma)=\frac{d}{dt}_{|_{t=0}}\ell_{\rho_{t}}(\gamma)=0
\]
for every simple closed geodesic $\gamma$ in $S$. Moreover, we have shown in the proof of Lemma \ref{lm:technical} that if $\ell_{\rho_{L}}(\gamma)\neq \ell_{\rho_{R}}(\gamma)$ then $\ell_{\rho_{L}}'(\gamma)=\ell_{\rho_{R}}'(\gamma)=0$. Otherwise, $\ell_{\rho_{L}}'(\gamma)=-\ell_{\rho_{R}}'(\gamma)$. Exploiting the isomorphism $\mathrm{SL}(2,\R)\times \SL(2,\R) \cong \mathrm{SO}_{0}(2,2)$, we find that the matrix $\rho_{t}(\gamma)$ is conjugated to
\[
 \exp\left(\frac{1}{2}\mathrm{diag}(\ell_{\rho_{L,t}}+\ell_{\rho_{R,t}},
 \ell_{\rho_{L,t}}-\ell_{\rho_{R,t}},
 \ell_{\rho_{R,t}}-\ell_{\rho_{L,t}},
 -\ell_{\rho_{L,t}}-\ell_{\rho_{R,t}}) \right)\ ,
\]
thus
\[
   d\trace(\rho(\gamma))(v)= \frac{d}{dt}_{|_{t=0}}\trace(\rho_{t}(\gamma))=0
\]
for every simple closed geodesic $\gamma$. Because $\rho$ is generic in the sense of \cite[Proposition 10.3]{pressure_metric}, the differentials of traces $\{d\trace(\rho(\gamma))\}_{\gamma}$ generate $T_{\rho}^{*}\mathcal{GH}(S)$ and we must have $v=0$. 
\end{proof}

\section{A non-degenerate Riemannian metric on $\mathcal{GH}(S)$}
In this section we define a non-degenerate Riemannian metric on $\mathcal{GH}(S)$ following Li's construction (\cite{Li_metric}) for the $\SL(3.\R)$-Hitchin component.

\subsection{Preliminaries} In this section we identify $\mathcal{GH}(S)$ with a connected component of the space of representations $\mathrm{Hom}(\pi_{1}(S), \mathrm{SO}_{0}(2,2))/\mathrm{SO}_{0}(2,2)$ via the holonomy map. Recall that by Mess' parametrization (\cite{Mess}), this component is smooth and diffeomorphic to $\Teich(S)\times \Teich(S)$. This allows to identify the tangent space $T_{\rho}\mathcal{GH}(S)$ at $\rho \in \mathcal{GH}(S)$ with the cohomology group $H^{1}(S, \mathfrak{so}_{0}(2,2)_{\mathrm{Ad}\rho})$, where $\mathfrak{so}_{0}(2,2)_{\mathrm{Ad}\rho}$ denotes the flat $\mathfrak{so}_{0}(2,2)$ bundle over $S$ with holonomy $\mathrm{Ad}\rho$. Explicitly, 
\[
    \mathfrak{so}_{0}(2,2)_{\mathrm{Ad}\rho} = (\tilde{S} \times \mathfrak{so}_{0}(2,2))/\sim
\]
where $(\tilde{x},v)\sim (\gamma \tilde{x}, \mathrm{Ad}\rho(\gamma)(v))$ for any $\gamma \in \pi_{1}(S)$, $x \in \tilde{S}$ and $v \in \mathfrak{so}_{0}(2,2)$. \\

In order to define a Riemannian metric on $\mathcal{GH}(S)$ is thus sufficient to introduce a non-degenerate scalar product on $H^{1}(S, \mathfrak{so}_{0}(2,2)_{\mathrm{Ad}\rho})$. Let us assume for the moment that we have chosen an inner product $\iota$ on the bundle $\mathfrak{so}_{0}(2,2)_{\mathrm{Ad}\rho}$ and a Riemannian metric $h$ on $S$. A Riemannian metric in cohomology follows then by standard Hodge theory that we recall briefly here.
The Riemannian metric $h$ and the orientation on $S$ induce a scalar product $\langle \cdot, \cdot \rangle$ on the space $\mathcal{A}^{p}(S)$ of $p$-forms on $S$, which allows to define a Hodge star operator  
\[
    *: \mathcal{A}^{p}(S) \rightarrow \mathcal{A}^{2-p}(S)
\]
by setting 
\[
    \alpha \wedge (*\beta)=\langle \alpha, \beta\rangle_{h} dA_{h} \ .
\]
This data gives a bi-linear pairing $\tilde{g}$ in the space of $\mathfrak{so}_{0}(2,2)_{\mathrm{Ad}\rho}$-valued $1$-forms as follows:
\[
    \tilde{g}(\sigma\otimes \phi, \sigma'\otimes \phi')=\int_{S}\iota(\phi, \phi')\sigma \wedge(*\sigma') \ ,
\]
where $\sigma, \sigma' \in \mathcal{A}^{1}(S)$ and $\phi, \phi'$ are sections of $\mathfrak{so}_{0}(2,2)_{\mathrm{Ad}\rho}$. \\
Given $\rho \in \mathcal{GH}(S)$, we denote by $\rho^{*}$ the contragradient representation (still into $\mathrm{SO}_{0}(2,2)$) defined by $(\rho^{*}(\gamma)L)(v)=L(\rho^{-1}(\gamma)v)$ for every $v \in \R^{4}$ and $L \in \R^{4*}=\mathrm{Hom}(\R^{4}, \R)$. The flat bundle $\mathfrak{so}_{0}(2,2)_{\mathrm{Ad}\rho^{*}}$ is dual to $\mathfrak{so}_{0}(2,2)_{\mathrm{Ad}\rho}$ and the inner product $\iota$ induces an isomorphism (\cite{R_book})
\[
    \#:\mathfrak{so}_{0}(2,2)_{\mathrm{Ad}\rho} \rightarrow \mathfrak{so}_{0}(2,2)_{\mathrm{Ad}\rho^{*}}
\]
defined by setting
\[
    (\#A)(B)=\iota(A,B)
\]
for $A,B \in \mathfrak{so}_{0}(2,2)$. This extends naturally to an isomorphism
\[
    \#: \mathcal{A}^{p}(S, \mathfrak{so}_{0}(2,2)_{\mathrm{Ad}\rho}) \rightarrow \mathcal{A}^{p}(S, \mathfrak{so}_{0}(2,2)_{\mathrm{Ad}\rho^{*}}) \ .
\]
Consequently, we can introduce a coboundary map 
\[
 \delta:\mathcal{A}^{p}(S, \mathfrak{so}_{0}(2,2)_{\mathrm{Ad}\rho}) \rightarrow \mathcal{A}^{p-1}(S, \mathfrak{so}_{0}(2,2)_{\mathrm{Ad}\rho})
\] 
by setting $\delta=-(\#)^{-1}*^{-1}d*\#$, and then a Laplacian operator 
\[
    \Delta:\mathcal{A}^{p}(S, \mathfrak{so}_{0}(2,2)_{\mathrm{Ad}\rho}) \rightarrow \mathcal{A}^{p}(S, \mathfrak{so}_{0}(2,2)_{\mathrm{Ad}\rho})
\]
given by $\Delta=d\delta+\delta d$.
A $1$-form $\xi$ is said to be harmonic if $\Delta \xi=0$, or, equivalently, if $d\xi=\delta \xi=0$. We have an orthogonal decomposition
\[
    \mathcal{A}^{1}(S, \mathfrak{so}_{0}(2,2)_{\mathrm{Ad}\rho})=\mathrm{Ker}(\Delta)\oplus \Ima(d)\oplus \Ima(\delta) 
\]
and by the non-abelian Hodge theory (\cite{R_book}) every cohomology class contains a unique harmonic representative. Therefore, the bi-linear pairing $\tilde{g}$ induces a scalar product in cohomology by setting
\begin{align*}
    g:H^{1}(S, \mathfrak{so}_{0}(2,2)_{\mathrm{Ad}\rho})&\times H^{1}(S, \mathfrak{so}_{0}(2,2)_{\mathrm{Ad}\rho})\rightarrow \R \\ 
    ([\alpha], [\beta]) &\mapsto \tilde{g}(\alpha_{harm}, \beta_{harm}) \ ,
\end{align*}
where $\alpha_{harm}$ and $\beta_{harm}$ are the harmonic representatives of $\alpha$ and $\beta$.

\subsection{Definition of the metric} As explained before, in order to define a Riemannian metric on $\mathcal{GH}(S)$ it is sufficient to define a Riemannian metric $h$ on $S$ and a scalar product $\iota$ on $\mathfrak{so}_{0}(2,2)_{\mathrm{Ad}\rho}$.\\

Let us begin with the metric $h$ on $S$. Given $\rho \in \mathcal{GH}(S)$, we denote by $M_{\rho}$ the unique GHMC anti-de Sitter manifold with holonomy $\rho$, up to isotopy. It is well-known that $M_{\rho}$ contains a unique embedded maximal (i.e. with vanishing mean curvature) surface $\Sigma_{\rho}$ (\cite{BBZ}). A natural choice for $h$ is thus the induced metric on $\Sigma_{\rho}$.\\

As for the scalar product $\iota$, we first introduce a scalar product in $\R^{4}$ that is closely related to the maximal surface and its induced metric. Lifting the surface $\Sigma_{\rho}$ to the universal cover, we can find a $\rho$-equivariant maximal embedding $\tilde{\sigma}:\tilde{S} \rightarrow \widehat{\AdS}_{3}\subset \R^{4}$, where $\widehat{\AdS_{3}}$ denotes the double cover of $\AdS_{3}$ consisting of unit time-like vectors in $\R^{4}$  endowed with a bi-linear form of signature $(2,2)$. For any $\tilde{x} \in \tilde{S}$, we thus have a frame of $\R^{4}$ formed by the unit tangent vectors $u_{1}(\tilde{x})$ and $u_{2}(\tilde{x})$ to the surface at $\tilde{\sigma}(\tilde{x})$, the time-like unit normal vector $N(\tilde{x})$ at $\tilde{\sigma}(\tilde{x})$ and the position vector $\tilde{\sigma}(x)$. We can define a scalar product $\iota_{\tilde{x}}$ on $\R^{4}$ depending on the point $\tilde{x} \in \tilde{S}$ by declaring the frame $\{u_{1}(\tilde{x}), u_{2}(\tilde{x}), \tilde{\sigma}(x), N(\tilde{x})\}$ to be orthonormal for $\iota_{\tilde{x}}$. Because $\mathfrak{so}_{0}(2,2) \subset \mathfrak{gl}(4,\R)\cong \R^{4}\times \R^{4*}$, the inner product $\iota_{\tilde{x}}$ induces an inner product on $\mathfrak{so}_{0}(2,2)$ and, consequently, on the trivial bundle $\tilde{S}\times \mathfrak{so}_{0}(2,2)$ over $\tilde{S}$. This descends to a metric $\iota$ on $\mathfrak{so}_{0}(2,2)_{\mathrm{Ad}\rho}$ by setting
\[
    \iota_{p}(\phi,\phi'):=\iota_{\tilde{x}}(\tilde{\phi}_{\tilde{x}}, \tilde{\phi}'_{\tilde{x}}) \ \ \ \text{for some $\tilde{x}\in \pi^{-1}(p)$}
\]
where $p \in S$, $\pi:\tilde{S} \rightarrow S$ is the natural projection and $\tilde{\phi}_{\tilde{x}}, \tilde{\phi}'_{\tilde{x}}$ are lifts of $\phi,\phi'$ to the trivial bundle $\tilde{S}\times \mathfrak{so}_{0}(2,2)$ evaluated at $\tilde{x}$. Because $\iota_{\tilde{x}}$ is $\rho$-equivariant, it is easy to check (see \cite{Li_metric}) that $\iota_{p}$ does not depend on the choice of $\tilde{x}\in \pi^{-1}(p)$ and thus $\iota$ is a well-defined metric on the flat bundle $\mathfrak{so}_{0}(2,2)_{\mathrm{Ad}\rho}$. \\

The following lemma is useful for computations with this metric:
\begin{lemma}[\cite{Li_metric}]\label{lm:computation} Assume that we have a matrix representation $H$ of the inner product $\iota_{\tilde{x}}$ at a point $\tilde{x} \in \pi^{-1}(p)$ with respect to the canonical basis of $\R^{4}$. Then
\[
    \iota_{p}(A,B)=\trace(A^{t}H^{-1}BH) \ \ \ \text{for} \ A,B \in \mathfrak{so}_{0}(2,2) \ . 
\]
\end{lemma}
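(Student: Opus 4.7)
The statement is a direct linear-algebra computation, and the plan is to unwind the construction of $\iota_p$ step by step.

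First, I would make the tensor-product description of $\iota_p$ explicit: via $\mathfrak{gl}(4,\R) \cong \R^4 \otimes (\R^4)^*$, the inner product $\iota_{\tilde x}$ on $\R^4$ together with its dual inner product $\iota_{\tilde x}^*$ on $(\R^4)^*$ induce a tensor-product inner product on $\R^4 \otimes (\R^4)^*$; restricting to $\mathfrak{so}_0(2,2)$ gives $\iota_p$. In the dual canonical basis, the matrix of $\iota_{\tilde x}^*$ is the inverse of the matrix of $\iota_{\tilde x}$, which is where the factor of $H^{-1}$ in the final formula will come from.

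Second, I would evaluate on the canonical basis. Writing $A = \sum_{i,j} A_{ij}\, e_i \otimes e^j$ and similarly for $B$, and using that the tensor inner product splits on elementary tensors,
\[
\iota_p(A,B) \;=\; \sum_{i,j,k,l} A_{ij}\, B_{kl}\, \iota_{\tilde x}(e_i, e_k)\, \iota_{\tilde x}^*(e^j, e^l),
\]
where each of the last two factors is an entry of $H$ or of $H^{-1}$. I would then reassemble this quadruple sum as a single matrix trace: one contraction pairs $A$ with a factor of $H^{\pm 1}$ coming from one tensor factor, the other contraction pairs $B$ with the complementary factor, and packaging the remaining matrix multiplications together produces $\trace(A^{t}H^{-1}BH)$. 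Since $\iota_p$ on $\mathfrak{so}_0(2,2)$ is the restriction of the inner product on $\mathfrak{gl}(4,\R)$, the fact that $A, B$ are antisymmetric with respect to the ambient form of signature $(2,2)$ plays no role in the identity.

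The main subtlety is purely bookkeeping: correctly identifying the matrix of $\iota_{\tilde x}^*$ in the dual basis (so that it is $H^{-1}$) and then arranging the factors of $H$ and $H^{-1}$ in the correct order inside the trace to match the stated formula. Beyond this ordering issue there is no conceptual obstacle.
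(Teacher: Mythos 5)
The paper offers no proof to compare against: Lemma \ref{lm:computation} is quoted from \cite{Li_metric} without argument, so your proposal must stand on its own — and it has a genuine gap, located precisely at the step you dismiss as ``purely bookkeeping.'' Your setup is the natural one and the quadruple sum is correct, but carrying out the contraction with the conventions you fix ($A=\sum_{i,j}A_{ij}\,e_i\otimes e^j$, so the first matrix index is the vector index carrying $H$ and the second is the dual index carrying $H^{-1}$) gives
\[
\sum_{i,j,k,l}A_{ij}B_{kl}\,H_{ik}\,(H^{-1})_{jl}=\trace\bigl(A^{t}HBH^{-1}\bigr),
\]
which is the \emph{opposite} placement of $H$ and $H^{-1}$ from the stated formula $\trace(A^{t}H^{-1}BH)$. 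Equivalently, your construction is the Frobenius product in the $\iota_{\tilde{x}}$-orthonormal frame: if $F$ is the frame matrix, then $H=(FF^{t})^{-1}$ and $\trace((F^{-1}AF)^{t}\,F^{-1}BF)=\trace(A^{t}HBH^{-1})$. The two placements are not interchangeable by any rearrangement inside the trace (cyclicity, transposition and symmetry of $H$ preserve the placement), and they define genuinely different inner products even on $\mathfrak{so}_{0}(2,2)$: with the paper's Fuchsian $H$ at $x=0$ and $E_{1}=\Phi\begin{psmallmatrix}0&1\\0&0\end{psmallmatrix}$, one computes $\trace(E_{1}^{t}HE_{1}H^{-1})=4/y^{2}$ while $\trace(E_{1}^{t}H^{-1}E_{1}H)=4y^{2}$, equal only at $y=1$ where $H=\Id$. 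So your argument, executed honestly, proves the formula with $H$ and $H^{-1}$ interchanged; the assertion that the reassembly ``produces $\trace(A^{t}H^{-1}BH)$'' does not follow from your setup, and your closing remark that only ordering could go wrong concedes exactly the point at issue.

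What your method would actually surface, if pushed through, is a conventions discrepancy in the printed statement itself: either ``matrix representation of $\iota_{\tilde{x}}$'' must be read as $FF^{t}$ (the \emph{inverse} of the Gram matrix), or the printed trace has $H$ and $H^{-1}$ transposed. The paper's internal evidence favors the placement your computation yields: taking $H$ to be the displayed Gram matrix on the Fuchsian locus (one checks $F^{t}HF=\Id$ for the frame coming from the embedding $f$), the hermitian extension of $\trace(A^{t}HBH^{-1})$ reproduces the value $16y^{2}$ of Corollary \ref{lm;computation_Fuchsian} — at $x=0$ the three diagonal contributions are $y^{4}\cdot\tfrac{4}{y^{2}}+4y^{2}\cdot 2+1\cdot 4y^{2}=16y^{2}$ — whereas the printed placement yields $4y^{6}+8y^{2}+4/y^{2}$, which is wrong for $y\neq 1$. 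Your strategy is the intended one and becomes a correct proof once you pin down which matrix ``$H$'' denotes; as written, the proposal papers over the one step where the content of the lemma lives.
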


\subsection{Restriction to the Fuchsian locus} In order to compute the restriction of the metric $g$ to the Fuchsian locus, we need to understand the induced metric on the equivariant maximal surface and find a matrix representation of the inner product $\iota$. \\

If $\rho \in \mathcal{GH}(S)$ is Fuchsian, the representation preserves a totally geodesic space-like plane in $\AdS_{3}$. Realizing explicitly (the double cover of) anti-de Sitter space as
\[
    \widehat{AdS}_{3}=\{x \in \R^{4} \ | \ x_{1}^{2}+x_{2}^{2}-x_{3}^{2}-x_{4}^{2}=-1\} \ ,
\]
we can assume, up to post-composition by an isometry, that $\rho$ preserves the hyperboloid
\[
    \mathcal{H}=\{x \in \R^{4} \ | \ x_{1}^{2}+x_{2}^{2}-x_{3}^{2}=-1 \ x_{4}=0\} \ ,
\]
which is isometric to the hyperbolic plane $\h^{2}=\{ z \in \C \ | \ \Ima(z)>0\}$: an explicit isometry (\cite{Li_metric}) being 
\begin{align}\label{eq:isometry}
        f: \h^{2} &\rightarrow \mathcal{H}\subset \R^{4} \\
        (x,y) &\mapsto \left(\frac{x}{y}, \frac{x^2+y^2-1}{2y}, \frac{x^2+y^2+1}{2y}, 0\right) \ .
\end{align}
The respresentation $\rho: \pi_{1}(S) \rightarrow \mathrm{SO}_{0}(2,2)$ factors then through the standard copy of $\mathrm{SO}_{0}(2,1)$ inside $\mathrm{SO}_{0}(2,2)$, which is isomorphic to $\PSL(2,\R)$ via the map (\cite{Kim})
\begin{align}\label{eq:Lie_group}
 \Phi:\PSL(2,\R) &\rightarrow \mathrm{SO}_{0}(2,1)<\mathrm{SO}_{0}(2,2)  \\
    \begin{pmatrix}
     a & b \\
     c & d
    \end{pmatrix} &\mapsto
    \begin{pmatrix}
     ad+bc & ac-bd & ac+bd & 0 \\
    ad-cd & \frac{a^2-b^2-c^2+d^2}{2} & \frac{a^2+b^2-c^2-d^2}{2} & 0 \\
     ab+cd & \frac{a^2-b^2+c^2-d^2}{2} & \frac{a^2+b^2+c^2+d^2}{2} & 0 \\
     0 & 0 & 0 & 1 
    \end{pmatrix} \ .
\end{align}
The map $\Phi$ induces a Lie algebra homomorphism, still denoted by $\Phi$, given by
\begin{align}\label{eq:Lie_alg}
    \Phi: \mathfrak{sl}(2,\R) &\rightarrow \mathfrak{so}_{0}(2,2) \\
    \begin{pmatrix}
     a & b \\
     c & -a 
    \end{pmatrix} &\mapsto
    \begin{pmatrix}
     0 & c-b & c+b & 0 \\
     b-c & 0 & 2a & 0 \\
     b+c & 2a & 0 & 0 \\
     0 & 0 & 0 & 0 
    \end{pmatrix}
\end{align}
It follows that if $\rho(\pi_{1}(S))=\Gamma<SO_{0}(2,2)$, then the maximal surface $\Sigma_{\rho}$ is realized by $\mathcal{H}/\Gamma$ and is isometric to the hyperbolic surface $\h^{2}/\Phi^{-1}(\Gamma)$.\\

Let us now turn our attention to the scalar product $\iota$ on $\mathfrak{so}_{0}(2,2)_{\mathrm{Ad}\rho}$. Recall that $\iota$ is determined by a family of inner products $\iota_{\tilde{x}}$ on $\R^{4}$ depending on $\tilde{x} \in \tilde{S}$, which is obtained by declaring the frame $\{u_{1}(\tilde{x}), u_{2}(\tilde{x}), \tilde{\sigma}(\tilde{x}), N(\tilde{x})\}$ orthonormal. If we   identify the universal cover of $\tilde{S}$ with $\h^{2}$, the map $f$ gives an explicit $\rho$-equivariant maximal embedding of $\tilde{S}$ into $\widehat{\AdS}_{3}$. Therefore, the coordinates of the vectors tangent and normal to the embedding with respect to the canonical basis of $\R^{4}$ can be explicitly computed and the following matrix representation $H$ of $\iota_{z}$ can be obtained for any $z \in \h^{2}$ (\cite[Corollary 6.5]{Li_metric}):
\[
    H=\begin{pmatrix}
       \frac{2x^2}{y^2}+1 & \frac{x(x^2+y^2-1)}{y^2} & -\frac{x(x^2+y^2+1)}{y^2} & 0 \\
       \frac{x(x^2+y^2-1)}{y^2} & \frac{(x^2+y^2-1)^2}{2y^2}+1 &
       -\frac{(x^2+y^2-1)(x^2+y^2+1)}{2y^2} & 0 \\
       - \frac{x(x^2+y^2+1)}{y^2} & -\frac{(x^2+y^2-1)(x^2+y^2+1)}{2y^2} & \frac{(x^2+y^2+1)^2}{2y^2}-1 & 0 \\
       0 & 0 & 0 & 1 
      \end{pmatrix}
\]
with
\[
    H^{-1}=\begin{pmatrix}
       \frac{2x^2}{y^2}+1 & \frac{x(x^2+y^2-1)}{y^2} & \frac{x(x^2+y^2+1)}{y^2} & 0 \\
       \frac{x(x^2+y^2-1)}{y^2} & \frac{(x^2+y^2-1)^2}{2y^2}+1 &
       \frac{(x^2+y^2-1)(x^2+y^2+1)}{2y^2} & 0 \\
       \frac{x(x^2+y^2+1)}{y^2} & \frac{(x^2+y^2-1)(x^2+y^2+1)}{2y^2} & \frac{(x^2+y^2+1)^2}{2y^2}-1 & 0 \\
       0 & 0 & 0 & 1 
      \end{pmatrix} \ .
\]
Together with Lemma \ref{lm:computation} we obtain the following:
\begin{cor}[\cite{Li_metric}]\label{lm;computation_Fuchsian} For any $z \in \h^{2}$, after extending the definition of Lemma \ref{lm:computation} to $A,B \in \mathfrak{so}(4,\C)$ by $\iota_{z}(A,B)=\trace(A^{t}H^{-1}\overline{B}H)$, we have
 \[
    \iota_{z}\left(\Phi\begin{psmallmatrix} -z & z^2 \\ -1 & z \end{psmallmatrix}, \Phi\begin{psmallmatrix} -z & z^2 \\ -1 & z \end{psmallmatrix}\right)=16y^2 \ .
 \] 
\end{cor}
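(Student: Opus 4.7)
The plan is to exploit $\SL(2,\R)$-equivariance to reduce the identity to a single check at $z = i$. The starting observation I would make is that the matrix $M(z) := \begin{psmallmatrix} -z & z^{2} \\ -1 & z \end{psmallmatrix}$, viewed as an $\mathfrak{sl}(2,\C)$-valued function of $z$, transforms nicely under the $\SL(2,\R)$-action on $\h^{2}$. Exploiting the rank-one factorization $M(z) = \begin{psmallmatrix} z \\ 1 \end{psmallmatrix}\begin{psmallmatrix} -1 & z \end{psmallmatrix}$, a short matrix computation yields
\[
 g\, M(z)\, g^{-1} = (cz+d)^{2}\, M(gz)
\]
for every $g = \begin{psmallmatrix} a & b \\ c & d \end{psmallmatrix} \in \SL(2,\R)$. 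Because $\Phi$ is a Lie algebra homomorphism, this passes to $\mathfrak{so}(4,\C)$ with $\Phi(g)$ in place of $g$.

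The second ingredient is the equivariance of $\iota_{z}$ itself. Since the $\rho$-equivariant frame $\{u_{1}, u_{2}, \tilde{\sigma}, N\}$ transforms by $\Phi(g)$ when $z$ is replaced by $gz$ (because $\Phi(g)$ is the isometry of $\widehat{\AdS}_{3}$ induced by the hyperbolic isometry $g$ of $\mathcal{H}$), one has $\iota_{gz}(\Phi(g)v, \Phi(g)w) = \iota_{z}(v,w)$ on $\R^{4}$, and the analogous identity on $\mathfrak{so}(4,\C)$ via the adjoint action. Combining the two equivariances and using that the sesquilinear extension of $\iota$ is conjugate-linear in the second variable, I would deduce
\[
 \iota_{gz}\bigl(\Phi(M(gz)),\Phi(M(gz))\bigr) = |cz+d|^{-4}\,\iota_{z}\bigl(\Phi(M(z)),\Phi(M(z))\bigr).
\]
Since $\Ima(gz) = y/|cz+d|^{2}$, the claimed identity $\iota_{z}(\Phi(M(z)),\Phi(M(z))) = 16 y^{2}$ is itself equivariant under this action; by the transitivity of $\PSL(2,\R)$ on $\h^{2}$ it suffices to verify it at a single point.

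I would take $z = i$, where all formulas collapse: one sees by inspection that the matrix $H$ of the excerpt becomes the identity, so $\iota_{i}(A,A) = \trace(A^{t}\bar{A})$. Evaluating $\Phi$ via (\ref{eq:Lie_alg}) with $(a,b,c) = (-i,-1,-1)$ yields a sparse matrix with only four nonzero entries, so the diagonal contributions to $\trace(\Phi(M(i))^{t}\overline{\Phi(M(i))})$ are four equal terms of value $4$, summing to $16$ as required.

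The step I expect to be the main obstacle is the matrix identity of the first step: although it is pure linear algebra, one must compute all four entries of $gM(z)g^{-1}$ and match them with $(cz+d)^{2}M(gz)$. Everything else is either formal (compatibility of $\Phi$ with conjugation, transitivity of the hyperbolic action) or the short explicit check at the base point, where the vanishing of the off-diagonal entries of $H$ makes the final trace computation immediate.
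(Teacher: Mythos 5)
Your argument is correct, but it takes a genuinely different route from the paper's. The paper (following Li's Corollary 6.5) obtains the identity by direct computation: it records the explicit matrices $H$ and $H^{-1}$ coming from the frame of the totally geodesic embedding $f$ and evaluates $\trace(A^{t}H^{-1}\overline{A}H)$ for $A=\Phi\begin{psmallmatrix} -z & z^{2}\\ -1 & z\end{psmallmatrix}$ entry by entry, with no appeal to symmetry. Your reduction to $z=i$ is sound in every checkable detail: the cocycle identity $gM(z)g^{-1}=(cz+d)^{2}M(gz)$ falls out in two lines from the factorization $M(z)=\begin{psmallmatrix} z\\ 1\end{psmallmatrix}\begin{psmallmatrix} -1 & z\end{psmallmatrix}$, since $g\begin{psmallmatrix} z\\ 1\end{psmallmatrix}=(cz+d)\begin{psmallmatrix} gz\\ 1\end{psmallmatrix}$ and $\begin{psmallmatrix} -1 & z\end{psmallmatrix}g^{-1}=(cz+d)\begin{psmallmatrix} -1 & gz\end{psmallmatrix}$, so the step you flagged as the main obstacle is in fact the easiest; $\Phi$ intertwines $\mathrm{Ad}(g)$ and $\mathrm{Ad}(\Phi(g))$ because the Lie algebra map is the differential of the group homomorphism (\ref{eq:Lie_group}); $f(gz)=\Phi(g)f(z)$ holds; $H=\Id$ at $z=i$; and $\Phi(M(i))$ has exactly the four nonzero entries $-2,-2i,-2,-2i$, so $\trace(A^{t}\overline{A})=\sum_{j,k}|A_{jk}|^{2}=16$. (One harmless slip: the diagonal of $A^{t}\overline{A}$ at $z=i$ is $(4,4,8,0)$, not four equal $4$'s; it is the four nonzero \emph{entries} of $A$ that each contribute $4$.) The one step you assert rather than prove is the one that genuinely needs an argument: the adjoint invariance $\iota_{gz}\bigl(\Phi(g)A\Phi(g)^{-1},\Phi(g)B\Phi(g)^{-1}\bigr)=\iota_{z}(A,B)$ requires a transformation law for the explicit $H$, and the ambiguity in the frame must be addressed. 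It does work: $H=FF^{t}$ where $F$ has the orthonormal frame $\{u_{1},u_{2},\tilde{\sigma},N\}$ as columns, and $\Phi(g)$ carries the frame at $f(z)$ to an orthonormal frame at $f(gz)$ up to a rotation of the tangent pair and a sign of the normal, both of which drop out of $FF^{t}$; hence $H_{gz}=\Phi(g)H_{z}\Phi(g)^{t}$, and invariance of $\trace(A^{t}H^{-1}\overline{B}H)$ under simultaneous conjugation follows by cyclicity of the trace. With that line added your proof is complete, and it buys a conceptual explanation of the factor $y^{2}$ --- it is forced by the weight of the cocycle $M(z)$ against $\Ima(gz)=y/|cz+d|^{2}$ --- whereas the paper's brute-force evaluation is longer but self-contained, needing nothing beyond Lemma \ref{lm:computation} and the displayed $H$.
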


The last ingredient we need in order to describe the restriction of $g$ to the Fuchsian locus is an explicit realization of the tangent space to the Fuchsian locus inside $T\mathcal{GH}(S)$. 

\begin{lemma}\label{lm:tangent} Let $\rho \in \mathcal{GH}(S)$ be a Fuchsian representation. 
\begin{enumerate}[(i)]
 \item The tangent space at $\rho$ to the Fuchsian locus is spanned by the cohomology class of $\phi(z)dz\otimes \Phi\begin{psmallmatrix} -z\ & z^2 \\ -1 & z \end{psmallmatrix}$,
 where $\phi(z)dz^2$ is a holomorphic quadratic differential on $\Sigma_{\rho}$.
 \item the $\mathfrak{so}_{0}(2,2)_{\mathrm{Ad}\rho}$-valued $1$-forms $\phi(z)dz\otimes \Phi\begin{psmallmatrix} -z\ & z^2 \\ -1 & z \end{psmallmatrix}$ are harmonic representatives in their own cohomology class.
\end{enumerate}
\end{lemma}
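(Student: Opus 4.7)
The plan is to reduce both assertions to classical Teichm\"uller theory using Mess' parametrization together with the Lie algebra embedding $\Phi:\mathfrak{sl}(2,\R)\to \mathfrak{so}_{0}(2,2)$ from (\ref{eq:Lie_alg}). The Fuchsian locus is the image of the diagonal map $\Teich(S)\hookrightarrow \dTeich\cong\mathcal{GH}(S)$, and under the infinitesimal Mess isomorphism $\mathfrak{sl}(2,\R)\oplus \mathfrak{sl}(2,\R)\cong \mathfrak{so}_{0}(2,2)$ the diagonal subalgebra $\{(X,X)\}$ corresponds precisely to $\Phi(\mathfrak{sl}(2,\R))\subset \mathfrak{so}_{0}(2,1)$. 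Hence the tangent space at $\rho=(\eta,\eta)$ to the Fuchsian locus is the image of the natural map $\Phi_{*}:H^{1}(S,\mathfrak{sl}(2,\R)_{\mathrm{Ad}\eta})\to H^{1}(S,\mathfrak{so}_{0}(2,2)_{\mathrm{Ad}\rho})$. Part (i) then follows from the classical harmonic-maps description of Teichm\"uller space: $T_{\eta}\Teich(S)\cong H^{1}(S,\mathfrak{sl}(2,\R)_{\mathrm{Ad}\eta})$ is spanned by classes whose hyperbolic-harmonic representative is $\phi(z)dz\otimes \mathcal{N}(z)$, with $\mathcal{N}(z)=\begin{psmallmatrix}-z & z^{2}\\ -1 & z\end{psmallmatrix}$ and $\phi(z)dz^{2}$ a holomorphic quadratic differential on $\Sigma_{\eta}\cong \h^{2}/\Phi^{-1}(\Gamma)$. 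Applying $\Phi$ pointwise yields the form displayed in (i).

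For part (ii), set $\xi=\phi(z)dz\otimes \Phi(\mathcal{N}(z))$ and verify separately that $d\xi=0$ and $\delta\xi=0$. Closedness is immediate in the flat trivialization of $\mathfrak{so}_{0}(2,2)_{\mathrm{Ad}\rho}$ over $\h^{2}$: both $\phi(z)$ and the entries of $\Phi(\mathcal{N}(z))$ are polynomial in $z$, so $\partial_{\bar{z}}(\phi\,\Phi(\mathcal{N}))=0$, and since $dz\wedge dz=0$ we obtain $d\xi=0$. To verify $\delta\xi=-(\#)^{-1}\,*^{-1}\!d\,*\#\,\xi=0$, the key observation is that at a Fuchsian representation the maximal surface is the totally geodesic plane $\mathcal{H}/\Gamma$ and the unit normal $N(z)=(0,0,0,1)$ is constant in $z$. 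Consequently, the matrix $H(z)$ displayed just after Corollary \ref{lm;computation_Fuchsian} is block-diagonal with $(4,4)$-entry equal to $1$, and since $\Phi(\mathcal{N}(z))$ takes values in the $\mathfrak{so}_{0}(2,1)$-block the restriction of the pointwise pairing $\iota_{z}$ to $\Phi(\mathfrak{sl}(2,\R))$ becomes an $\mathrm{Ad}$-invariant bilinear form on $\mathfrak{sl}(2,\R)$, hence proportional to a form depending only on the hyperbolic geometry of $\Sigma_{\eta}$. Under this identification, the computation of $\delta\xi$ reduces to the corresponding computation for $\phi(z)dz\otimes \mathcal{N}(z)$ on the hyperbolic surface, which is known to vanish.

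The main obstacle is to justify the block-diagonal reduction rigorously: even at a Fuchsian point the upper $3\times 3$ block of $H(z)$ depends non-trivially on $z$, so $\#$ contributes anti-holomorphic terms when commuting past $d$. One needs the explicit formulas for $H(z)$ and $H^{-1}(z)$ displayed before Corollary \ref{lm;computation_Fuchsian} together with Lemma \ref{lm:computation} to check that, when paired with an element of $\Phi(\mathfrak{sl}(2,\R))$, these $z$-derivatives assemble into an $\mathrm{Ad}$-invariant expression on $\mathfrak{sl}(2,\R)$. Once this matching is in place, the holomorphicity of $\phi$ and of $\mathcal{N}(z)$ forces $d(*\#\xi)=0$, and the harmonicity of the classical $\mathfrak{sl}(2,\R)$-valued form transfers to $\xi$ as claimed.
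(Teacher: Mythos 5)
Your part (i) is correct and is essentially the paper's own argument: Goldman's description of $T\Teich(S)$ by the $\mathfrak{sl}(2,\R)$-valued forms $\phi(z)dz\otimes\begin{psmallmatrix} -z & z^2 \\ -1 & z\end{psmallmatrix}$, pushed into $H^{1}(S,\mathfrak{so}_{0}(2,2)_{\mathrm{Ad}\rho})$ via the inclusion induced by $\Phi$, with the Fuchsian locus as the diagonal in Mess' parametrization. Your one-line verification of $d\xi=0$ in the flat trivialization is also fine. The gap is in $\delta$-closedness, and it begins with a claim that is actually false: the restriction of $\iota_{z}$ to $\Phi(\mathfrak{sl}(2,\R))$ is \emph{not} an $\mathrm{Ad}$-invariant bilinear form on $\mathfrak{sl}(2,\R)$. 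Since $\mathfrak{sl}(2,\R)$ is simple, any $\mathrm{Ad}$-invariant symmetric bilinear form on it is a multiple of the Killing form, which has signature $(2,1)$, whereas $\iota_{z}$ is positive definite. What holds is only \emph{equivariance}, $\iota_{\gamma z}(\mathrm{Ad}\rho(\gamma)A,\mathrm{Ad}\rho(\gamma)B)=\iota_{z}(A,B)$; the form genuinely depends on the point $z$, which is precisely why $\#$ does not commute with $d$ and why harmonicity is not formal. So invariance cannot be used to pin the pairing down to \enquote{a form depending only on the hyperbolic geometry} and thereby outsource the vanishing to a classical $\mathfrak{sl}(2,\R)$ computation.

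Even after repairing the statement to what you presumably intend — that the splitting of $\mathfrak{so}_{0}(2,2)$ into the $\mathfrak{so}_{0}(2,1)$-block and the span of $E_{4},E_{5},E_{6}$ is $\iota_{z}$-orthogonal (true, but it needs checking from the block-diagonal shape of $H$), and that $\Phi$ carries the standard point-dependent inner product on $\mathfrak{sl}(2,\R)$ to a \emph{constant} multiple of $\iota_{z}$ on the block — your proposal defers exactly the computation that constitutes the proof. Your closing paragraph (\enquote{one needs the explicit formulas \dots to check}, \enquote{once this matching is in place}) names the hard step but does not perform it. That step is the entire content of the paper's argument for (ii): using Lemma \ref{lm:computation} with the explicit $H$ and $H^{-1}$, one computes $\#E_{1}$, $\#E_{2}$, $\#E_{3}$ (each with nontrivial $x,y$-dependent coefficients) and verifies the exact cancellation of all $x$- and $y$-dependence in the combination $z^{2}\#E_{1}-\#E_{3}-2z\#E_{2}$, which yields $\#\xi=-4\phi(z)dz\otimes E_{1}^{*}+4z^{2}\phi(z)dz\otimes E_{3}^{*}-4z\phi(z)dz\otimes E_{2}^{*}$ with \emph{holomorphic} coefficients; then $*\phi(z)dz=i\overline{\phi(z)}d\bar{z}$ makes $*\#\xi$ anti-holomorphic, so $d*\#\xi=0$ and $\delta\xi=0$. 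Without carrying out this cancellation — or, alternatively, giving a precise reference for the claimed vanishing in the $\mathfrak{sl}(2,\R)$ model \emph{together with} a verified $z$-independent conformality constant for $\Phi$ between the two point-dependent inner products — the proof is incomplete at its only non-routine step.
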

\begin{proof} 
(i)\ Let $\rho'=\Phi^{-1}(\rho)$ be the corresponding Fuchsian representation in $\PSL(2,\R)$. The claim follows from the fact (\cite{Goldman_symplectic}) that the tangent space to Teichm\"uller space is generated by the $\mathfrak{sl}(2,\R)_{\mathrm{Ad}\rho'}$-valued $1$-forms $\phi(z)dz\otimes \begin{psmallmatrix} -z\ & z^2 \\ -1 & z \end{psmallmatrix}$ and thus the tangent space to the Fuchsian locus is generated by the inclusion of $H^{1}(S, \mathfrak{sl}(2,\R)_{\mathrm{Ad}\rho'})$ inside $H^{1}(S, \mathfrak{so}_{0}(2,2)_{\mathrm{Ad}\rho})$ induced by the map $\Phi$. \\
(ii) \ We need to show that $\phi(z)dz\otimes \Phi\begin{psmallmatrix} -z\ & z^2 \\ -1 & z \end{psmallmatrix}$ is $d$-closed and $\delta$-closed. The first fact has been proved in   \cite[Lemma 6.6]{Li_metric}. As for $\delta$-closedness, we will follow the lines of the aformentioned lemma. From the definition of $\delta$, it is enough to show that $d*(\#)\left(\phi(z)dz\otimes \Phi\begin{psmallmatrix} -z\ & z^2 \\ -1 & z \end{psmallmatrix}\right)=0$. By linearity
\begin{align*}
  \#\left(\phi(z)dz\otimes\Phi\begin{psmallmatrix} -z & z^2 \\ -1 & z \end{psmallmatrix}\right) 
  &=z^2\phi(z)dz\otimes \#\left(\Phi\begin{psmallmatrix} 0 & 1 \\ 0 & 0 \end{psmallmatrix}\right)-\phi(z)dz\otimes \#\left(\Phi\begin{psmallmatrix} 0 & 0 \\ 1 & 0 \end{psmallmatrix}\right) \\ &-2z\phi(z)dz\otimes \#\left(\Phi\begin{psmallmatrix} 1/2 & 0 \\ 0 & -1/2 \end{psmallmatrix}\right) \ .
\end{align*}
We then want to calculate $\#\left(\Phi\begin{psmallmatrix} 0 & 1 \\ 0 & 0 \end{psmallmatrix}\right)$,  $\#\left(\Phi\begin{psmallmatrix} 0 & 0 \\ 1 & 0 \end{psmallmatrix}\right)$ and $\#\left(\Phi\begin{psmallmatrix} 1/2 & 0 \\ 0 & -1/2 \end{psmallmatrix}\right)$. We choose a basis for $\mathfrak{so}_{0}(2,2)$ given by
\[
    E_{1}=\Phi\begin{psmallmatrix} 0 & 1 \\ 0 & 0 \end{psmallmatrix}=\begin{psmallmatrix} 0 & -1 & 1 & 0\\
        1 & 0 & 0 & 0 \\ 1 & 0 & 0 & 0 \\ 0 & 0 & 0 & 0 \end{psmallmatrix} \ \ \ 
    E_{2}=\Phi\begin{psmallmatrix} 1/2 & 0 \\ 0 & -1/2 \end{psmallmatrix}=\begin{psmallmatrix} 0 & 0 & 0 & 0\\
        0 & 0 & 1 & 0 \\ 0 & 1 & 0 & 0 \\ 0 & 0 & 0 & 0  \end{psmallmatrix}
\]
\[
    E_{3}=\Phi\begin{psmallmatrix} 0 & 0 \\ 1 & 0 \end{psmallmatrix}=\begin{psmallmatrix} 0 & 1 & 1 & 0 \\
        -1 & 0 & 0 & 0 \\ 1 & 0 & 0 & 0 \\ 0 & 0 & 0 & 0 \end{psmallmatrix} \ \ \
    E_{4}=\begin{psmallmatrix} 0 & 0 & 0 & 1\\
        0 & 0 & 0 & 0 \\ 0 & 0 & 0 & 0 \\ 1 & 0 & 0 & 0 \end{psmallmatrix} \ \ \ 
    E_{5}=\begin{psmallmatrix} 0 & 0 & 0 & 0 \\
        0 & 0 & 0 & 1\\ 0 & 0 & 0 & 0 \\ 0 & 1 & 0 & 0 \end{psmallmatrix} \ \ \ 
    E_{6}=\begin{psmallmatrix} 0 & 0 & 0 & 0 \\
        0 & 0 & 0 & 0 \\ 0 & 0 & 0 & 1 \\ 0 & 0 & -1 & 0 \end{psmallmatrix} 
\]
The map $\#:\mathfrak{so}_{0}(2,2)_{\mathrm{Ad}\rho} \rightarrow \mathfrak{so}_{0}(2,2)_{\mathrm{Ad}\rho^{*}}$ is defined by setting
\[
 (\#A)(B)=\iota(A,B) 
\]
thus
\[
    \#A=\sum_{i=1}^{6} \iota(A,E_{i})E_{i}^{*} \ ,
\]
where $E_{i}^{*}$ satisfies
\[
    E_{i}^{*}(E_{j})=\begin{cases} 1 \ \ \ \ \ \ \text{if $i=j$} \\
    0 \ \ \ \ \ \  \text{otherwise}
    \end{cases} \ .
\]
Applying Lemma \ref{lm:computation} to compute $\iota(E_{i}, E_{j})$, we obtain the following
\begin{align*}
    \#E_{1}&=\frac{4}{y^{2}}(E_{1}^{*}-x^2E_{3}^{*}+xE_{2}^{*}) \\
    \#E_{2}&=\frac{4}{y^{2}}(xE_{1}^{*}-x(x^2+y^2)E_{3}^{*}+(x^2+\tfrac{y^{2}}{2})E_{2}^{*})\\
    \#E_{3}&=\frac{4}{y^{2}}(-x^{2}E_{1}^{*}+(x^{2}+y^2)E_{3}^{*}-x(x^2+y^2)E_{2}^{*}) \ .
\end{align*}
Putting everything together, we get
\begin{align*}
  & \ \ \ \ d*(\#)\left(\phi(z)dz\otimes \Phi\begin{psmallmatrix} -z\ & z^2 \\ -1 & z \end{psmallmatrix}\right) \\
  &=d*[z^{2}\phi(z)dz\otimes\#E_{1}-\phi(z)dz\otimes\#E_{3}-2z\phi(z)dz\otimes\#E_{2}] \\
  &=d*[-4\phi(z)dz\otimes E_{1}^{*}+4z^{2}\phi(z)dz\otimes E_{3}^{*}-4z\phi(z)dz \otimes E_{2}^{*}] \ .
\end{align*}
Because $z$ is a conformal coordinate for the induced metric on the maximal surface, from the definition of the Hodge star operator we see that $*dx=dy$ and $*dy=-dx$, hence, extending the operator to complex $1$-forms by complex anti-linearity (i.e. $*(i\alpha)=-i*\bar{\alpha}$) we see that $*\phi(z)dz=i\overline{\phi(z)}d\bar{z}$. Therefore,
\begin{align*}
 & \ \ \ \ d*[-4\phi(z)dz\otimes E_{1}^{*}+4z^{2}\phi(z)dz\otimes E_{3}^{*}-4z\phi(z)dz \otimes E_{2}^{*}] \\
 &=d[-4i\overline{\phi(z)}d\bar{z}\otimes E_{1}^{*}+4i\overline{\phi(z)}\bar{z}^{2}d\bar{z}\otimes E_{3}^{*}-4i\overline{\phi(z)}\bar{z}d\bar{z}\otimes E_{2}^{*}]=0
\end{align*}
because $\phi(z)$ is holomorphic. As a consequence $\phi(z)dz\otimes \Phi\begin{psmallmatrix} -z\ & z^2 \\ -1 & z \end{psmallmatrix}$ is $d$-closed and $\delta$-closed, hence it is harmonic.
\end{proof}

We can finally prove one of the main results of the section:
\begin{teo} The metric $g$ on $\mathcal{GH}(S)$ restricts on the Fuchsian locus to a constant multiple of the Weil-Petersson metric on Teichm\"uller space.
\end{teo}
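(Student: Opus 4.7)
The plan is to pick the harmonic representative supplied by Lemma \ref{lm:tangent}, evaluate $\tilde g$ on it using Corollary \ref{lm;computation_Fuchsian}, and recognize the resulting integral as the Weil-Petersson pairing on quadratic differentials. All the ingredients needed have already been assembled, and the argument is essentially a direct computation.

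First I would fix a Fuchsian representation $\rho\in\mathcal{GH}(S)$, identify $\Sigma_\rho$ with $\h^2/\Phi^{-1}(\rho(\pi_1 S))$, and consider a tangent vector $v\in T_\rho\mathcal{GH}(S)$ tangent to the Fuchsian locus. By Lemma \ref{lm:tangent}(i) there is a unique holomorphic quadratic differential $\phi(z)\,dz^2$ on $\Sigma_\rho$ such that $v$ is represented by
\[
\alpha := \phi(z)\,dz\otimes \Phi\begin{psmallmatrix} -z & z^2 \\ -1 & z \end{psmallmatrix},
\]
and by part (ii) of the same lemma the form $\alpha$ is already harmonic, so $g(v,v)=\tilde g(\alpha,\alpha)$ without any need to harmonic-project.

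Next I would plug directly into the definition of $\tilde g$. The scalar-product factor is exactly what Corollary \ref{lm;computation_Fuchsian} computes: it equals $16y^2$ at every point $z=x+iy\in\h^2$. For the $1$-form factor, the identity $*(\phi(z)\,dz)=i\overline{\phi(z)}\,d\bar z$ established in the proof of Lemma \ref{lm:tangent}(ii), combined with $dz\wedge d\bar z=-2i\,dx\wedge dy$, gives $\phi(z)\,dz\wedge*(\phi(z)\,dz)=2|\phi(z)|^2\,dx\wedge dy$. Putting the two ingredients together,
\[
g(v,v)=32\int_{\Sigma_\rho} y^2|\phi(z)|^2\,dx\,dy.
\]
Since the hyperbolic metric on $\Sigma_\rho$ is $|dz|^2/y^2$ in the coordinate $z$, the right-hand side is exactly $32$ times the Weil-Petersson squared norm of $\phi(z)\,dz^2$. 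Polarization then upgrades this to the tensorial identity $g=32\,g_{WP}$ on the Fuchsian locus.

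The main issue, rather than any deep obstacle, will be to keep the real/complex conventions consistent throughout: the form $\alpha$ is complex-valued while it represents the real cohomology class $v$, Corollary \ref{lm;computation_Fuchsian} uses a Hermitian extension of $\iota$, and the Hodge star was defined by complex anti-linear extension inside the proof of Lemma \ref{lm:tangent}. Once these are reconciled (either by passing to the real representative $\alpha+\bar\alpha$ or by interpreting $\tilde g$ via its Hermitian extension, checking that cross terms $\tilde g(\alpha,\bar\alpha)$ vanish because $dz\wedge dz=0$), the absolute value of the proportionality constant may shift by a universal numerical factor, but its positivity and independence of $v$ are unchanged, which is all that is needed for the statement.
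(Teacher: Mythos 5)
Your proposal is correct and follows essentially the same route as the paper: take the harmonic representative $\phi(z)dz\otimes \Phi\begin{psmallmatrix} -z & z^2 \\ -1 & z \end{psmallmatrix}$ from Lemma \ref{lm:tangent}, insert the value $16y^{2}$ from Corollary \ref{lm;computation_Fuchsian} and the identity $*(\phi(z)dz)=i\overline{\phi(z)}d\bar{z}$ into the definition of $\tilde{g}$, and identify the result as $32\langle\phi,\psi\rangle_{WP}$. The real/complex convention issue you flag is handled in the paper exactly as you suggest, by extending $\tilde{g}$ to a Hermitian pairing on $\mathfrak{so}(4,\C)_{\mathrm{Ad}\rho}$-valued $1$-forms and taking the real part, and your constant agrees with the paper's.
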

\begin{proof} By Lemma \ref{lm:tangent}, it is sufficient to show that
\[
 \tilde{g}\left(\phi(z)dz\otimes \Phi\begin{psmallmatrix} -z\ & z^2 \\ -1 & z \end{psmallmatrix}, \psi(z)dz\otimes \Phi\begin{psmallmatrix} -z\ & z^2 \\ -1 & z \end{psmallmatrix}\right)=\langle \phi, \psi\rangle_{WP}\ ,
\]
where here we are extending $\tilde{g}$ to an hermitian metric on the space of $\mathfrak{so}(4,\C)_{\mathrm{Ad}\rho}$-valued $1$-forms.
From the definition of $\tilde{g}$ and Corollary \ref{lm;computation_Fuchsian} we have
\begin{align*}
    & \ \ \ \ \tilde{g}\left(\phi(z)dz\otimes \Phi\begin{psmallmatrix} -z\ & z^2 \\ -1 & z \end{psmallmatrix}, \psi(z)dz\otimes \Phi\begin{psmallmatrix} -z\ & z^2 \\ -1 & z \end{psmallmatrix}\right)\\
    &=\mathcal{R}e\int_{S} \iota_{z}\left(\Phi\begin{psmallmatrix} -z\ & z^2 \\ -1 & z \end{psmallmatrix}, \Phi\begin{psmallmatrix} -z\ & z^2 \\ -1 & z \end{psmallmatrix}\right) \phi(z)dz\wedge *(\psi(z)dz) \\
    &=\mathcal{R}e\int_{S} \iota_{z}\left(\Phi\begin{psmallmatrix} -z\ & z^2 \\ -1 & z \end{psmallmatrix}, \Phi\begin{psmallmatrix} -z\ & z^2 \\ -1 & z \end{psmallmatrix}\right) \phi(z)dz\wedge (i\overline{\psi(z)}d\bar{z}) \\
    &=\mathcal{R}e\int_{S} 16i\phi(z)\overline{\psi(z)}y^2dz\wedge d\bar{z}\\
    &=32\langle \phi, \psi \rangle_{WP} \ .
\end{align*}
\end{proof}

Our next goal is to show that the Fuchsian locus is totally geodesic. To this aim it is sufficient to find an isometry of $(\mathcal{GH}(S), g)$ that fixes the Fuchsian locus. In Mess' parametrization of $\mathcal{GH}(S)$ as $\Teich(S) \times \Teich(S)$, there is a natural involution that swaps left and right representations, thus fixing pointwise the Fuchsian locus. Identifying $\SL(2,\R)\times \SL(2,\R)$ with $\mathrm{SO}_{0}(2,2)$, this corresponds to conjugation by $Q=\mathrm{diag}(-1,-1,1,-1) \in O(2,2)$. Therefore, we introduce the map
\begin{align*}
    q: \mathcal{GH}(S) &\rightarrow \mathcal{GH}(S) \\
        \rho &\mapsto Q\rho Q^{-1}
\end{align*}
and show that this is an isometry for the metric $g$. \\

\noindent We first need to compute the induced map in cohomology
\[
    q_{*}:H^{1}(S, \mathfrak{so}_{0}(2,2)_{\mathrm{Ad}\rho})\rightarrow H^{1}(S, \mathfrak{so}_{0}(2,2)_{\mathrm{Ad}q(\rho)})\ .
\]
It is well-known (see e.g. \cite{Goldman_symplectic}) that a tangent vector to a path of representations $\rho_{t}$ is a $1$-cocycle, that is a map $u:\pi_{1}(S) \rightarrow \mathfrak{so}_{0}(2,2)$ satisfying 
\[
    u(\gamma\gamma')-u(\gamma')=\mathrm{Ad}(\rho(\gamma))u(\gamma') \ .
\]
It is then clear that, if $u$ is a $1$ cocycle tangent to $\rho$, then $QuQ^{-1}$ is a $1$-cocycle tangent to $q(\rho)$. A $1$-cocycle represents a cohomology class in $H^{1}(\pi_{1}(S), \mathfrak{so}_{0}(2,2))$ which is isomorphic to $H^{1}(S, \mathfrak{so}_{0}(2,2)_{\mathrm{Ad}\rho})$ via 
\begin{align*}
 H^{1}(S, \mathfrak{so}_{0}(2,2)_{\mathrm{Ad}\rho}) &\rightarrow H^{1}(\pi_{1}(S), \mathfrak{so}_{0}(2,2)) \\
 [\sigma \otimes \phi] &\mapsto u_{\sigma \otimes \phi}: \gamma \mapsto \int_{\gamma}\sigma \otimes \phi \ .
\end{align*}

\begin{lemma}\label{lm:cohomology_map} For any $\sigma \in \mathcal{A}^{1}(S)$ and for any section $\phi$ of $\mathfrak{so}_{0}(2,2)_{\mathrm{Ad}\rho}$, we have
\[
    q_{*}[\sigma \otimes \phi]=[\sigma \otimes Q\phi Q^{-1}]
\]
\end{lemma}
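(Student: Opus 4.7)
The plan is to prove the identity by transferring the problem to group cohomology via the de Rham--to--group cohomology isomorphism recalled just before the statement, and then observing that conjugation by the constant matrix $Q$ commutes with the integration map $\gamma\mapsto \int_{\gamma}$.

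First I would verify that $\sigma\otimes Q\phi Q^{-1}$ is a bona fide section of $\mathfrak{so}_{0}(2,2)_{\mathrm{Ad}\,q(\rho)}$. Lifting $\phi$ to a $\rho$-equivariant map $\tilde{\phi}:\tilde S\rightarrow \mathfrak{so}_{0}(2,2)$ satisfying $\tilde{\phi}(\gamma\tilde x)=\mathrm{Ad}(\rho(\gamma))\tilde{\phi}(\tilde x)$, a direct computation
\[
Q\tilde{\phi}(\gamma\tilde x)Q^{-1}=Q\rho(\gamma)\tilde{\phi}(\tilde x)\rho(\gamma)^{-1}Q^{-1}=\mathrm{Ad}(q(\rho)(\gamma))\bigl(Q\tilde{\phi}(\tilde x)Q^{-1}\bigr)
\]
shows that $Q\tilde{\phi}Q^{-1}$ is $q(\rho)$-equivariant, hence descends to a section of $\mathfrak{so}_{0}(2,2)_{\mathrm{Ad}\,q(\rho)}$.

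Second, I would compute $q_{*}$ at the level of group cohomology. Let $\rho_{t}$ be a smooth path with $\rho_{0}=\rho$ and associated tangent cocycle $u(\gamma)=\tfrac{d}{dt}_{|_{t=0}}\rho_{t}(\gamma)\rho(\gamma)^{-1}\in\mathfrak{so}_{0}(2,2)$. Then $q(\rho_{t})=Q\rho_{t}Q^{-1}$ has tangent cocycle
\[
\frac{d}{dt}_{|_{t=0}}\bigl(Q\rho_{t}(\gamma)Q^{-1}\bigr)\bigl(Q\rho(\gamma)Q^{-1}\bigr)^{-1}=Qu(\gamma)Q^{-1},
\]
so that $q_{*}[u]=[QuQ^{-1}]$ in $H^{1}(\pi_{1}(S),\mathfrak{so}_{0}(2,2))$ (the adjoint action is now through $q(\rho)$).

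Third, I would combine the two pieces through the de Rham isomorphism. Under the map $[\sigma\otimes\phi]\mapsto\bigl(\gamma\mapsto \int_{\gamma}\sigma\otimes\phi\bigr)$, the class $[\sigma\otimes\phi]$ corresponds to the cocycle $u_{\sigma\otimes\phi}$. Because $Q$ is constant on $\tilde S$, conjugation commutes with the integration, giving
\[
Qu_{\sigma\otimes\phi}(\gamma)Q^{-1}=Q\!\left(\int_{\gamma}\sigma\otimes\phi\right)\!Q^{-1}=\int_{\gamma}\sigma\otimes Q\phi Q^{-1}=u_{\sigma\otimes Q\phi Q^{-1}}(\gamma).
\]
Combined with the previous step, this gives $q_{*}[\sigma\otimes\phi]=[\sigma\otimes Q\phi Q^{-1}]$, as desired.

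The whole argument is essentially formal; the only minor pitfall is to keep careful track of the change of coefficient bundle (from $\mathrm{Ad}\,\rho$ to $\mathrm{Ad}\,q(\rho)$) and to check that conjugation by a \emph{constant} element $Q$ of $O(2,2)$ genuinely intertwines the two flat structures, which is exactly the computation performed in the first paragraph.
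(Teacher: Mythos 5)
Your proposal is correct and follows essentially the same route as the paper: the paper's proof consists precisely of the identity $u_{\sigma\otimes Q\phi Q^{-1}}=Qu_{\sigma\otimes\phi}Q^{-1}$, justified by the fact that conjugation by the constant matrix $Q$ commutes with the integration $\gamma\mapsto\int_{\gamma}$, with the description of $q_{*}$ on tangent cocycles taken as clear from the preceding discussion. Your additional verifications (the $q(\rho)$-equivariance of $Q\tilde{\phi}Q^{-1}$ and the explicit computation of the tangent cocycle of $q(\rho_{t})$) merely spell out details the paper leaves implicit.
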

\begin{proof}It is sufficient to show that $u_{\sigma \otimes Q\phi Q^{-1}}=Qu_{\sigma \otimes \phi}Q^{-1}$. This follows because, for any $\gamma \in \pi_{1}(S)$
\[
    \int_{\gamma} \sigma\otimes Q\phi Q^{-1}=Q\left(\int_{\gamma} \sigma \otimes \phi\right)Q^{-1} \ .
\]
\end{proof}

\noindent By an abuse of notation, we will still denote by $q_{*}$ the map induced by $q$ at the level of $\mathfrak{so}_{0}(2,2)_{\mathrm{Ad}\rho}$-valued $1$-forms. Our next step is to show that $q_{*}$ preserves the metric $\tilde{g}$.

\begin{lemma}\label{lm:preserve}For any $\sigma, \sigma' \in \mathcal{A}^{1}(S)$ and for any sections $\phi$ and $\phi'$ of $\mathfrak{so}_{0}(2,2)_{\mathrm{Ad}\rho}$, we have
\[
    \tilde{g}(q_{*}(\sigma\otimes \phi), q_{*}(\sigma'\otimes \phi'))=\tilde{g}(\sigma\otimes \phi, \sigma\otimes \phi) \ .
\]
\end{lemma}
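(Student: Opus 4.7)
The plan is to reduce the stated identity to a pointwise equality of the inner products $\iota$ on $\R^{4}$, which in turn follows from a short matrix computation via Lemma \ref{lm:computation}. The underlying principle is that $Q\in O(2,2)$ naturally intertwines all the geometric data used to build $\tilde{g}$ at $\rho$ and at $q(\rho)$.

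First I would record how the objects entering the definition of $\tilde{g}$ transform under $q$. If $\tilde{\sigma}_{\rho}:\tilde{S}\to\widehat{\AdS}_{3}$ is a $\rho$-equivariant maximal embedding, then $Q\tilde{\sigma}_{\rho}$ is a $q(\rho)$-equivariant maximal embedding, because $Q$ is an ambient isometry of $\widehat{\AdS}_{3}$. Consequently, the frame $\{u_{1},u_{2},\tilde{\sigma},N\}$ that defines $\iota_{\rho,\tilde{x}}$ is carried by $Q$ to the frame that defines $\iota_{q(\rho),\tilde{x}}$. From the relation $F^{t}HF=I$ between such an orthonormal frame matrix $F$ and the matrix $H$ of the inner product in the canonical basis, together with $Q^{t}=Q$ and $Q^{2}=I$ for $Q=\mathrm{diag}(-1,-1,1,-1)$, one obtains the transformation rule
\[
H_{q(\rho)} \;=\; Q\, H_{\rho}\, Q.
\]
Moreover, since $Q$ is an isometry of the ambient space, the metric $h$ on $S$ induced from the maximal surface is the same for $\rho$ and $q(\rho)$, so the Hodge star $*$ on $S$ is unchanged under $q$.

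Given these ingredients, the proof is algebraic. Applying Lemma \ref{lm:computation} to the left-hand side, substituting $H_{q(\rho)}=QH_{\rho}Q$ and $H_{q(\rho)}^{-1}=QH_{\rho}^{-1}Q$, then collapsing adjacent factors via $Q^{2}=I$ and using the cyclicity of trace, one reduces
\[
\iota_{q(\rho)}\bigl(Q\phi Q^{-1},Q\phi' Q^{-1}\bigr) \;=\; \trace\bigl(\phi^{t}H_{\rho}^{-1}\phi' H_{\rho}\bigr) \;=\; \iota_{\rho}(\phi,\phi')
\]
pointwise on $S$. Integrating this identity against $\sigma\wedge *\sigma'$ and invoking that $*$ is unchanged then yields the claim. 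There is no serious obstacle here; the only step requiring some care is the derivation of $H_{q(\rho)}=QH_{\rho}Q$, which encodes the fact that $Q$ sends the $\iota_{\rho}$-orthonormal frame to the $\iota_{q(\rho)}$-orthonormal frame.
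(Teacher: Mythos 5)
Your proposal is correct and follows essentially the same route as the paper: both use that $Q\tilde{\sigma}$ is the $q(\rho)$-equivariant maximal embedding (so the induced metric $h$, and hence the Hodge star, is unchanged), deduce the transformation rule $H^{q}=Q^{t}HQ$, and then verify $\iota^{q}_{p}(q_{*}\phi,q_{*}\phi')=\iota_{p}(\phi,\phi')$ pointwise via the trace formula of Lemma \ref{lm:computation} together with $Q=Q^{t}=Q^{-1}$, before integrating. Your derivation of $H_{q(\rho)}=QH_{\rho}Q$ from the frame relation $F^{t}HF=I$ merely makes explicit a step the paper states without proof, so there is no substantive difference.
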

\begin{proof} Given $\rho \in \mathcal{GH}(S)$, we denote by $M_{\rho}$ the GHMC anti-de Sitter manifold with holonomy $\rho$. Because $M_{\rho}$ and $M_{Q\rho Q^{-1}}$ are isometric via the map induced in the quotients by $Q: \widehat{AdS}_{3} \rightarrow \widehat{AdS}_{3}$, the minimal surfaces $\Sigma_{\rho}$ and $\Sigma_{Q\rho Q^{-1}}$ are isometric as well. In particular, their induced metric $h$ and $h^{q}$ coincide on every $\tilde{x} \in \tilde{S}$. Moreover, if $\tilde{\sigma}: \tilde{S} \rightarrow \widehat{AdS}_{3}$ is the $\rho$-equivariant maximal embedding, then $Q\tilde{\sigma}$ is $Q\rho Q^{-1}$-equivariant and still maximal. We deduce that if $H$ is a matrix representation of the $\rho$-equivariant inner product $\iota_{\tilde{x}}$ on $\tilde{S} \times \mathfrak{so}_{0}(2,2)$, then $H^{q}=Q^{t}HQ$ is the matrix representation of the $Q\rho Q^{-1}$-equivariant inner product $\iota_{\tilde{x}}^{q}$. Therefore, noting that $Q=Q^{t}=Q^{-1}$, for any $\phi$ and $\phi'$ sections of $\mathfrak{so}_{0}(2,2)_{\mathrm{Ad}\rho}$ and for any $p \in S$, we have
\begin{align*}
    \iota_{p}(\phi,\phi')&=\iota_{\tilde{x}}(A,B) \ \ \ \text{taking $\tilde{\phi}_{\tilde{x}}=A$, $\tilde{\phi}'_{\tilde{x}}=B$ and $\tilde{x} \in \pi^{-1}(p)$}\\
    &=\trace(A^{t}H^{-1}BH) \ \ \ \text{by Lemma \ref{lm:computation}}\\
    &=\trace(Q^{t}(Q^{t})^{-1}A^{t}Q^{t}(Q^{t})^{-1}QQ^{-1}H^{-1}(Q^{t})^{-1}Q^{t}Q^{-1}QBQ^{-1}Q(Q^{t})^{-1}Q^{t}HQQ^{-1}) \\
    &=\trace(Q^{t}(q_{*}(A))^{t}(H^{q})^{-1}q_{*}(B)H^{q}Q^{-1})\\
    &=\trace((q_{*}(A))^{t}(H^{q})^{-1}q_{*}(B)H^{q}Q^{-1}Q^{t})\\
    &=\trace((q_{*}(A))^{t}(H^{q})^{-1}q_{*}(B)H^{q})\\
    &=\iota_{\tilde{x}}^{q}(q_{*}(A), q_{*}(B))=\iota_{p}^{q}(q_{*}(\phi), q_{*}(\phi')) \ \ \ \text{by Lemma \ref{lm:cohomology_map}.}
\end{align*}
We can now compute
\begin{align*}
    \tilde{g}(\sigma \otimes \phi, \sigma'\otimes \phi')&= \int_{S} \iota(\phi, \phi')\sigma \wedge (*\sigma') \\
    &=\int_{S} \iota(\phi,\phi')\langle \sigma, \sigma' \rangle_{h} dA_{h} \\
    &=\int_{S} \iota^{q}(q_{*}(\phi), q_{*}(\phi')) \langle \sigma, \sigma' \rangle_{h^{q}} dA_{h^{q}} \\
    &=\tilde{g}(q_{*}(\sigma \otimes \phi, \sigma' \otimes \phi') \ \, 
\end{align*}
which shows that $q_{*}$ is an isometry for the Riemannian metrics on the bundles $\mathcal{A}^{1}(S, \mathfrak{so}_{0}(2,2)_{\mathrm{Ad}\rho})$ and $\mathcal{A}^{1}(S, \mathfrak{so}_{0}(2,2)_{\mathrm{Ad}q(\rho)})$
\end{proof}

\noindent In order to conclude that $q: \mathcal{GH}(S)\rightarrow \mathcal{GH}(S)$ is an isometry for $g$, it is sufficient now to show that the map $q_{*}$ preserves harmonicity of forms.

\begin{lemma} The map $q_{*}:H^{1}(S, \mathfrak{so}_{0}(2,2)_{\mathrm{Ad}\rho})\rightarrow H^{1}(S, \mathfrak{so}_{0}(2,2)_{\mathrm{Ad}q(\rho)})$ sends harmonic forms to harmonic forms.
\end{lemma}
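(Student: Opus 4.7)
The plan is to reduce harmonicity to its two defining conditions, namely being simultaneously $d$-closed and $\delta$-closed, and then verify that $q_*$ preserves each of these individually. The first half is essentially tautological, while the second half will follow from the isometry property already established in Lemma \ref{lm:preserve}.

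For $d$-closedness, Lemma \ref{lm:cohomology_map} tells us that $q_*$ acts on representative forms by $\sigma\otimes\phi \mapsto \sigma\otimes Q\phi Q^{-1}$. Since $Q$ is a constant matrix, the exterior differential acts only on the scalar factor $\sigma$ and commutes tautologically with conjugation by $Q$; hence $d\circ q_* = q_*\circ d$, and $d$-closed forms are sent to $d$-closed forms.

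For $\delta$-closedness, I would exploit the fact that, on the closed surface $S$, the codifferential $\delta = -(\#)^{-1}*^{-1}d*\#$ is the formal $L^2$-adjoint of $d$ with respect to the global inner product induced by $\tilde{g}$. Lemma \ref{lm:preserve} asserts precisely that $q_*$ is an isometry between $\mathcal{A}^{\bullet}(S, \mathfrak{so}_0(2,2)_{\mathrm{Ad}\rho})$ and $\mathcal{A}^{\bullet}(S, \mathfrak{so}_0(2,2)_{\mathrm{Ad}q(\rho)})$ equipped with these inner products. Combining this with the first step and the adjunction identity $\langle d\alpha,\beta\rangle_{L^{2}} = \langle \alpha,\delta\beta\rangle_{L^{2}}$ forces the intertwining $q_*\circ \delta = \delta^{q}\circ q_*$, so $\delta$-closedness is preserved and harmonic forms are mapped to harmonic forms.

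The only bit of bookkeeping is the identification of the paper's explicit $\delta$ with the $L^2$-formal adjoint of $d$. This is a standard Hodge-theoretic fact for a closed Riemannian manifold valued in a flat bundle with metric, but if one prefers to avoid invoking it, one can check directly that each of the three factors $\#$, $*$, $d$ in the defining formula is intertwined by $q_*$: with $d$ this is the first step above; with $*$ it follows because the induced metrics $h$ and $h^{q}$ on $\Sigma_\rho$ and $\Sigma_{q(\rho)}$ coincide (noted inside the proof of Lemma \ref{lm:preserve}); and with $\#$ it follows from the relation $H^{q}=Q^{t}HQ$ between the matrix representations of the two inner products, again recorded in the same proof. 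I do not anticipate any genuine obstacle in this lemma.
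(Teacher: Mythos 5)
Your proof is correct, but it takes a genuinely different route from the paper. The paper never invokes adjointness: after disposing of $d$-closedness by linearity (as you do), it proves $\delta$-closedness by brute force, expanding $\#\phi=\sum_{j}\iota(\phi,E_{j})E_{j}^{*}$ in the explicit basis $E_{1},\dots,E_{6}$ of $\mathfrak{so}_{0}(2,2)$, using $\iota^{q}(Q\phi Q^{-1},E_{j})=\iota(\phi,Q^{-1}E_{j}Q)$, and computing the conjugates $Q^{-1}E_{j}Q$ one by one ($Q^{-1}E_{1}Q=-E_{3}$, $Q^{-1}E_{2}Q=-E_{2}$, etc.), so that the coefficients of $\#^{q}(Q\phi Q^{-1})$ are, up to sign and the swap $E_{1}^{*}\leftrightarrow E_{3}^{*}$, those already known to satisfy the closedness condition. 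Your primary argument --- $q_{*}$ is an isometry intertwining $d$, hence intertwines the formal adjoint $\delta$ --- is cleaner and more general (it would work verbatim for any constant-matrix conjugation enjoying the analogue of Lemma \ref{lm:preserve}), at the price of invoking that the paper's $\delta=-(\#)^{-1}*^{-1}d*\#$ is the $L^{2}$-adjoint of $d$; this is indeed the standard fact underlying the orthogonal decomposition the paper quotes from \cite{R_book}, so the appeal is legitimate. Two small points to tighten: the adjunction argument needs the isometry property on $0$-forms as well as $1$-forms, whereas Lemma \ref{lm:preserve} is stated only for $1$-forms --- the extension is immediate from the pointwise identity $\iota_{p}^{q}(q_{*}\phi,q_{*}\phi')=\iota_{p}(\phi,\phi')$ together with $h=h^{q}$, but it should be said; and in your fallback factor-by-factor check, the intertwining $\#^{q}\circ q_{*}=q_{*}\circ\#$ only makes sense once you specify the induced action of $Q$ on the dual bundle, namely $L\mapsto L\circ\mathrm{Ad}(Q^{-1})$, which is exactly what the paper's explicit relations $Q^{-1}E_{j}Q=\pm E_{j'}$ encode in coordinates. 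With those caveats your fallback is precisely an invariant repackaging of the paper's computation, and your main argument replaces it entirely.
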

\begin{proof}Let $\sum_{i}\sigma_{i} \otimes \phi_{i}$ be the harmonic representative in its cohomology class. This is equivalent to saying that $d(\sum_{i}\sigma_{i}\otimes \phi_{i})=0$ and $\delta(\sum_{i}\sigma_{i} \otimes \phi_{i})=0$. We need to show that these imply $d(\sum_{i}\sigma_{i}\otimes Q\phi_{i} Q^{-1})=0$ and $\delta(\sum_{i}\sigma_{i} \otimes Q\phi_{i} Q^{-1})=0$, as well. \\
The condition $d(\sum_{i}\sigma_{i} \otimes Q\phi_{i} Q^{-1})=0$ easily follows by linearity of $d$. \\
As for $\delta$-closedness, by definition of $\delta$, we have $\delta(\sum_{i}\sigma_{i}\otimes \phi_{i})=0$ if and only if $d*\#(\sum_{i}\sigma_{i} \otimes \phi_{i})=d*(\sum_{i}\sigma_{i} \otimes \#\phi_{i})=0$. 
Let us denote by $\#^{q}$ the analogous operator defined on $\mathfrak{so}_{0}(2,2)_{\mathrm{Ad}q(\rho)}$-valued $1$-forms. Let $\{E_{j}\}_{j=1}^{6}$ be the basis of $\mathfrak{so}_{0}(2,2)$ introduced in the proof of Lemma \ref{lm:tangent} and denote by $\{E_{j}^{*}\}_{j=1}^{6}$ its dual. By definition of $\#$ and $\#^{q}$ we have
\[
    \#A=\sum_{j=1}^{6}\iota(A,E_{j})E_{j}^{*} \ \ \ \text{and} \ \ \
    \#^{q}A=\sum_{j=1}^{6}\iota^{q}(A,E_{j})E_{j}^{*} \ ,
\]
where, as in Lemma \ref{lm:preserve}, we denoted by $\iota^{q}$ the inner product on $\mathfrak{so}_{0}(2,2)_{\mathrm{Ad}q(\rho)}$. Hence, $d*(\sum_{i}\sigma_{i} \otimes \#\phi_{i})=0$ if and only if 
\[
    d*\left(\sum_{i}\sigma_{i}\otimes \sum_{j=1}^{6}\iota(\phi_{i}, E_{j})E_{j}^{*}\right)=0  \ ,                                                                                                                                                                                                                                                                                                          \]
which implies that 
\begin{equation}\label{eq:relation}
d*\left(\sum_{i}\sigma_{i}\iota(\phi_{i}, E_{j})\right)=0 \ \ \ 
\text{for every $j=1,\dots,6$ .} 
\end{equation}
Therefore, using that $\iota^{q}(QAQ^{-1}, QBQ^{-1})=\iota(A,B)$ for every $A,B \in \mathfrak{so}_{0}(2,2)$, we have
\begin{align*}
    d*\left(\sum_{i}\sigma_{i}\otimes \#^{q}Q\phi_{i} Q^{-1}\right)&=d*\left(\sum_{i}\sigma_{i}\otimes \sum_{j=1}^{6}\iota^{q}(Q\phi_{i} Q^{-1},E_{j})E_{j}^{*}\right)\\
    &=d*\left(\sum_{i}\sigma_{i} \otimes \sum_{j=1}^{6}\iota^{q}(Q\phi_{i} Q^{-1}, QQ^{-1}E_{j}QQ^{-1})E_{j}^{*}\right)\\
    &=d*\left(\sum_{i}\sigma_{i} \otimes  \sum_{j=1}^{6}\iota(\phi_{i}, Q^{-1}E_{j}Q)E_{j}^{*}\right) \ .
\end{align*}
A straightforward computation shows that
\begin{align*}
    &Q^{-1}E_{1}Q=-E_{3} \ \ &Q^{-1}E_{2}Q=-E_{2} \ \ \ \ \ \ \ \
    &Q^{-1}E_{3}Q=-E_{1} \\
    &Q^{-1}E_{4}Q=E_{4} \ \ \
    &Q^{-1}E_{5}Q=E_{5} \ \ \ \ \ \ \ \ \ \ \
    &Q^{-1}E_{6}Q=-E_{6} \ \ \
\end{align*}
thus $d*(\sum_{i}\sigma_{i} \otimes  \sum_{j=1}^{6}\iota(\phi_{i}, Q^{-1}E_{j}Q)E_{j}^{*})=0$, because, up to a sign, the coefficients of $E_{j}^{*}$ coincide with those in Equation \ref{eq:relation} for $j\neq 1,3$ and the coefficient of $E_{1}^{*}$ is swapped with that of $E_{3}^{*}$ in Equation \ref{eq:relation}. Hence, $d*\#(\sum_{i}\sigma_{i}\otimes Q\phi_{i} Q^{-1})=0$, and then $\delta(\sum_{i}\sigma_{i} \otimes Q\phi_{i} Q^{-1})=0$, as required.
\end{proof}

\noindent Combining the above result with Lemma \ref{lm:preserve}, by definition of the metric $g$ on $\mathcal{GH}(S)$ we obtain the following:
\begin{teo}The map $q:\mathcal{GH}(S)\rightarrow \mathcal{GH}(S)$ is an isometry for $g$. In particular, the Fuchsian locus, which is pointwise fixed by $q$, is totally geodesic.
\end{teo}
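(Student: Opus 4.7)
The plan is to assemble the two preceding lemmas and then invoke the standard Riemannian fact that the fixed point set of an isometry is totally geodesic.

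First, I would show that $q$ is an isometry of $(\mathcal{GH}(S), g)$. Recall that, by definition, for cohomology classes $[\alpha], [\beta] \in H^{1}(S, \mathfrak{so}_{0}(2,2)_{\mathrm{Ad}\rho})$ the metric is given by $g([\alpha],[\beta]) = \tilde{g}(\alpha_{harm}, \beta_{harm})$, where $\alpha_{harm}, \beta_{harm}$ are the unique harmonic representatives. By the previous lemma, $q_{*}\alpha_{harm}$ and $q_{*}\beta_{harm}$ are harmonic $\mathfrak{so}_{0}(2,2)_{\mathrm{Ad}q(\rho)}$-valued $1$-forms, and since harmonic representatives are unique in a cohomology class, they are precisely $(q_{*}[\alpha])_{harm}$ and $(q_{*}[\beta])_{harm}$. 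Then Lemma \ref{lm:preserve} applied termwise gives
\[
    g(q_{*}[\alpha], q_{*}[\beta]) = \tilde{g}(q_{*}\alpha_{harm}, q_{*}\beta_{harm}) = \tilde{g}(\alpha_{harm}, \beta_{harm}) = g([\alpha], [\beta]),
\]
so $dq_{\rho}: T_{\rho}\mathcal{GH}(S) \to T_{q(\rho)}\mathcal{GH}(S)$ is a linear isometry. Since $q$ is a smooth involution, this proves that $q$ is an isometry of $(\mathcal{GH}(S), g)$.

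Next I would identify the fixed locus of $q$ with the Fuchsian locus. Under Mess' parametrization $\mathcal{GH}(S) \cong \Teich(S)\times \Teich(S)$ coming from $\SL(2,\R)\times \SL(2,\R) \cong \mathrm{SO}_{0}(2,2)$, conjugation by $Q=\mathrm{diag}(-1,-1,1,-1)$ corresponds exactly to swapping left and right factors (this is recorded in the paragraph introducing $q$). Therefore $q(\rho)=\rho$ up to conjugation if and only if $\eta_{L}$ and $\eta_{R}$ are conjugate in $\PSL(2,\R)$, i.e.\ precisely when $\rho$ is Fuchsian in the sense of Section 1.

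Finally, I would conclude that the Fuchsian locus is totally geodesic. This is the standard Riemannian fact that the fixed point set $\mathrm{Fix}(q)$ of an isometric involution of a Riemannian manifold is a totally geodesic submanifold: if $p \in \mathrm{Fix}(q)$ and $v \in T_{p}\mathrm{Fix}(q)$, then the geodesic $\gamma_{v}(t)$ and $q\circ \gamma_{v}(t)$ are both geodesics with the same initial data (since $dq_{p}$ fixes $v$), so they coincide, whence $\gamma_{v}(t) \in \mathrm{Fix}(q)$ for all $t$. Applying this to our $q$ gives the theorem. The only non-routine part of the argument is the two lemmas that are already established; the present proof is essentially a two-line assembly, and I would not expect any real obstacle beyond the small bookkeeping needed to match fixed-locus-of-$q$ with the Fuchsian locus as defined in Section 1.
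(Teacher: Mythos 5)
Your proposal is correct and matches the paper's argument, which likewise obtains the theorem by combining Lemma \ref{lm:preserve} with the lemma that $q_{*}$ preserves harmonicity (using uniqueness of harmonic representatives, exactly as you do) and then invoking the standard fact that the fixed locus of an isometric involution is totally geodesic. Your explicit identification of $\mathrm{Fix}(q)$ with the Fuchsian locus and the two-geodesics-with-same-initial-data argument simply spell out steps the paper leaves implicit.
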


\bibliographystyle{alpha}
\bibliography{Reference}

\end{document}